\documentclass{amsart}
\usepackage[T1]{fontenc}
\usepackage[sorting=nty,natbib=true,backend=biber,style=numeric]{biblatex}
\usepackage[margin=1in]{geometry}
\usepackage{bm}% bold math
\usepackage{xcolor}% Text highlighting
\usepackage{booktabs}
\usepackage{amssymb}
\usepackage{amsthm}
\usepackage{mathrsfs}
\usepackage{amsfonts}
\usepackage{amsmath}
\usepackage{xcolor}
\usepackage{tikz}
\usepackage{tkz-graph}
\usepackage{enumitem}
\usepackage{caption}
\usepackage{subcaption}

\usepackage{url, hyperref}
\hypersetup{citecolor=blue, linkcolor=blue, colorlinks=true}
\usepackage{setspace}
\usepackage{graphicx}
\usepackage{mathtools}
\usepackage{calc}
\usepackage{tikz}
\usetikzlibrary{quantikz}
\usepackage{array}
\newcolumntype{P}[1]{>{\centering\arraybackslash}p{#1}}

\usepackage{longtable}
\tikzstyle{v}=[circle, draw, fill=white,
inner sep=0pt, minimum width=4pt]
\newcommand{\dr}[1]{\rule[-3ex]{0pt}{0pt}\rule[4ex]{0pt}{0pt} \raisebox{\dimexpr-.5\height+.5\ht\strutbox\relax}{\tikz\draw#1;}}

\usepackage{color}
\usepackage{hyperref} % adds hyper links inside the generated pdf file
\hypersetup{
	colorlinks=true,       % false: boxed links; true: colored links
	linkcolor=blue,        % color of internal links
	citecolor=blue,        % color of links to bibliography
	filecolor=magenta,     % color of file links
	urlcolor=blue         
}

\newcommand{\Z}{\mathbb{Z}}

\newcommand{\id}{{\rm id}}

\newcommand{\rank}{\operatorname{rank}}

\newcommand{\allpp}{{\mathcal{A}}}

\newcommand{\graphf}{{\mathcal{G}}}

\newcommand{\type}{{\rm type}}
\newcommand{\numfact}[2]{\mathcal{N}_{#1}({#2})}
\newcommand{\numftree}[1]{N({#1})}

\newcommand{\partperm}[2]{({#1},{#2})}

\newcommand{\len}{\operatorname{len}}

\newcommand{\cycle}{c}
\newcommand{\parts}{\ell}

\newcommand{\lattice}{\mathscr{L}}

\newcommand{\ind}{\operatorname{in}}
\newcommand{\outd}{\operatorname{out}}

\DeclareMathOperator{\aut}{aut}

\usepackage{fancyhdr}
\usepackage{xcolor}
\usepackage{soul}

\theoremstyle{plain}
\newtheorem{theorem}{Theorem}[section]
\newtheorem{corollary}[theorem]{Corollary}
\newtheorem{proposition}[theorem]{Proposition}
\newtheorem{lemma}[theorem]{Lemma}
\newtheorem{conjecture}[theorem]{Conjecture}

\theoremstyle{definition}
\newtheorem{definition}[theorem]{Definition}
\newtheorem{example}[theorem]{Example}
\newtheorem{algorithm}[theorem]{Algorithm}

\theoremstyle{remark}

\newtheorem{remark}[theorem]{Remark}

\title{Minimal transitive factorizations \\ supported on quasi-threshold graphs}

\author{Cordelia Yuqiao Li}
\address{Department of Mathematics, University of Washington, Seattle, WA 98195}
\email{yuqiaoli@uw.edu}

\author{Ricky Ini Liu}
\address{Department of Mathematics, University of Washington, Seattle, WA 98195}
\email{riliu@uw.edu}
\thanks{The second author is supported by a Simons Foundation Travel Support for Mathematicians award.}

\begin{document}

\begin{abstract}
We study the number of minimal transitive factorizations of the identity permutation in $S_n$ into transpositions supported on a quasi-threshold graph. We show that this number is always divisible by $(2n-2)!/n!$, which is the factorization count for a star graph, as shown by Irving and Rattan. To prove this, we give a combinatorial formula for the number of such factorizations as a weighted sum over a subset of the $n^{n-2}$ \emph{factorization trees}, which are edge-weighted spanning trees satisfying certain flow constraints.
\end{abstract}
\maketitle
\section{Introduction}

A classical problem in combinatorics is to count the number of factorizations of a permutation in the symmetric group $S_n$ into transpositions (or other permutations) satisfying certain constraints. Hurwitz~\cite{Hurwitz_ori_paper} first studied this problem in the context of counting ramified coverings of Riemann surfaces, giving a formula for the number of minimal length transitive factorizations of a given permutation into transpositions, where \emph{transitive} means that the transpositions generate the entire group $S_n$. D\'enes \cite{Denes} later studied a special case of this when the given permutation is an $n$-cycle, showing that the number of minimal factorizations equals $n^{n-2}$, the number of labeled trees with $n$ vertices, using a combinatorial argument; Moszkowski \cite{moszkowski} later gave a direct bijection between these factorizations and labeled trees. Later, Goulden and Jackson~\cite{Goulden-Jackson} derived Hurwitz's formula using Lagrange inversion on generating functions and extended it to give a general expression for the number of transitive factorizations of any length in terms of the irreducible characters of $S_n$. On the bijective side, Bousquet-M\'elou and Schaeffer \cite{paper:planar_constellations} constructed a bijection between certain transitive factorizations and planar constellations, and Duchi, Poulalhon, and Schaeffer \cite{paper:Galaxy} later gave a fully bijective proof that recovers Hurwitz's formula. 

% This problem also arises naturally in theoretical nuclear physics \cite{paper:cresc_taylor} as a question related to topologically distinct holomorphic maps on the sphere. 

A related problem is to consider factorizations into transpositions when these transpositions are restricted to lie in the edge set of a given graph $G$, in which case we say the factorization is \emph{supported} on $G$. Let $\numfact{\sigma}{G}$ denote the number of minimal transitive factorizations of a permutation $\sigma \in S_n$ supported on $G$. Irving and Rattan \cite{min_star} give the following closed formula when $G$ is a star graph $K_{1,n-1}$, generalizing an earlier result of Pak \cite{paper:Pak}:
\begin{equation*}\label{Eq:star_formula}
    \numfact{\sigma}{K_{1,n-1}} = \frac{(n+m-2)!}{n!}l_1 \cdots l_m,
\end{equation*}
where $l_1, \dots, l_m$ are the cycle lengths of $\sigma$. When $\sigma$ is the identity permutation, this formula simplifies to \[\numfact{\id}{K_{1,n-1}} = \frac{(2n-2)!}{n!}.\]

Computer experimentation suggests that whenever the graph $G$ lies in the class of \emph{cographs} (graphs containing no induced path on four vertices), the count $\numfact{\id}{G}$ is also divisible by the quantity $(2n-2)!/n!$. In this paper, we consider the subclass of \emph{quasi-threshold graphs}, which are the comparability graphs of (naturally labeled) rooted forests. The main result of this paper is to prove this divisibility property for quasi-threshold graphs via the following theorem, which generalizes the result of Irving and Rattan for the identity permutation.
\begin{theorem}
    Let $G$ be a quasi-threshold graph on $n$ vertices. Then
    \[\numfact{\id}{G} = \frac{(2n-2)!}{n!}\sum_{T} w(T), \]
    where $T$ ranges over factorization trees that are weighted spanning trees of $G$, and $w(T)$ is the weight of $T$. 
\end{theorem}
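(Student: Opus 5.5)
We first reduce the count of factorizations of the identity to a count of minimal factorizations of long cycles, and then organize those by spanning trees. A minimal transitive factorization of $\id \in S_n$ into transpositions has length $2n-2$. Its underlying multigraph on $[n]$ is connected, and every edge used appears an even number of times.

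The first step is to cut such a factorization $\tau_1\cdots\tau_{2n-2}=\id$ at position $n-1$. This writes $\tau_1\cdots\tau_{n-1}=\pi$ and $\tau_{n}\cdots\tau_{2n-2}=\pi^{-1}$. By a standard genus count, minimality and transitivity force $\pi$ to be an $n$-cycle, and each half to be a minimal factorization of it supported on a spanning tree. By D\'enes and Moszkowski, each half corresponds to a labeled tree together with a compatible edge ordering. I expect this naive splitting not to respect the structure cleanly, however: the two halves need not use the same tree, and transitivity is only required jointly.

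So instead I would define the factorization tree attached to a factorization directly, as the paper's definition presumably does. Take a canonical spanning tree $T$ of the support, for instance via the first occurrence of each edge. Record as edge weights the multiplicities, or more generally the flow data of how often transpositions cross each tree cut. The flow constraints should encode that the product is the identity: across each cut of $T$, an even number of transpositions cross in a balanced way.

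The core step is to count the factorizations with a given factorization tree $T$ and prove that this count equals $\frac{(2n-2)!}{n!}w(T)$. Here quasi-threshold structure enters. Since $G$ is the comparability graph of a rooted forest, every edge of $G$ joins a vertex to one of its ancestors. I would prove the formula by induction on the forest, removing a leaf or root vertex, or decomposing $G$ as the join of a root vertex with a disjoint union of smaller quasi-threshold graphs. For the join with a dominating vertex $r$, I would use the Irving--Rattan star count $\numfact{\sigma}{K_{1,n-1}}=\frac{(n+m-2)!}{n!}l_1\cdots l_m$ as a base. Factorizations on $G$ whose restriction to each component below $r$ produces a permutation with given cycle type are then completed by star transpositions through $r$. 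The product $l_1\cdots l_m$ times the shuffle counts should telescope to $(2n-2)!/n!$ times a product of local weights, which I will take to define $w(T)$.

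The main obstacle is this shuffle/telescoping step. One must show that interleaving the component factorizations with the star transpositions at $r$ contributes a factor independent of the internal choices, so that the factorial prefactor separates out. I would attempt it with a generating-function identity (Lagrange inversion, as in Goulden--Jackson) for the join operation, verifying that the resulting multinomial factors combine into $(2n-2)!/n!$ times the local tree weights. Failing that, I would construct a direct bijection in the style of Moszkowski that moves transpositions along the ancestor chains.
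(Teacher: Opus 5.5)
Your proposal leaves the real content of the theorem unproved, and the tree it would use is the wrong one. The tree and weights you attach to a factorization are never pinned down. You plan to define $w(T)$ as whatever the computation produces, but the statement concerns the specific weight $\binom{n}{s(1),\dots,s(n)}\prod_e w(e)$ with $s(i)=1+\ind(i)-\outd(i)$. The step you call the main obstacle is where all the work lies.

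Several supporting claims are also false. In $S_3$, $(s_{12},s_{13},s_{23},s_{13})$ is a minimal transitive factorization of $\id$ that uses $\{1,2\}$ once and crosses the cut $\{1\}\mid\{2,3\}$ three times. Cutting $(s_{12},s_{12},s_{13},s_{13})$ after $n-1=2$ factors gives $\pi=\id$, not a $3$-cycle.

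More seriously, a spanning tree of the support cannot work. The four factorizations with support $\{1,2\},\{1,3\}$, such as $(s_{12},s_{12},s_{13},s_{13})$, would be assigned that tree. But it violates $s(1)\ge 0$, so it is not a factorization tree. The paper assigns $(s_{12},s_{12},s_{13},s_{13})$ the tree with edges $\{1,2\},\{2,3\}$, which is not even contained in its support. So no per-tree identity of the stated form can hold for your classification.

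Three ideas are missing. First, interleaving is handled by Hurwitz moves, not by a shuffle count or Lagrange inversion; a plain shuffle changes the product. Set $\rank(s_{ij})=\max\{i,j\}$. A Hurwitz move that carries a factor of smaller rank leftward past one of larger rank keeps the moved factor and replaces the other by a conjugate of the same rank. It preserves support on $G$ because the ancestors of a vertex in a rooted forest form a chain. Hence every factorization supported on $G$ comes from a unique rank-sorted one, and a sorted one with $l(i)$ factors of rank $i$ arises from exactly $\binom{2n-2}{l(1),\dots,l(n)}$ factorizations. This is the source of $(2n-2)!$.

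Second, your induction through a dominating vertex $r$ cannot close as stated. The part below $r$ generally has a nontrivial partitioned permutation as its product, so a hypothesis about $\numfact{\id}{\cdot}$ says nothing about it. The star part at $r$ neither starts from the identity nor is transitive on its own, so the Irving--Rattan formula does not apply. What is needed is a relative star count. Start from a partitioned permutation with one cycle per block and cycle type $\lambda$. Then there are $\frac{(\ell(\lambda)+\ell(\mu)-1)!}{\aut(\mu)}\Pi(\lambda)$ minimal star factorizations reaching a single block of cycle type $\mu$. The paper proves this by applying the cycle lemma to the walk recording the length of the cycle through the center.

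Third, the tree must be built so that every vertex is in exactly this situation. Scan the sorted factorization; each $s_{ij}$ with $i<j$ pulls the cycle containing $i$ from the vertex $k$ currently holding it up to $j$. This adds the edge $\{k,j\}$ with weight equal to the length of that cycle. Then vertex $i$ receives one cycle per incoming edge and retains $s(i)$ fixed points. Its contribution is $\frac{l(i)!}{s(i)!}$ times the product of its incoming weights, and combining with the multinomial gives $\frac{(2n-2)!}{n!}\binom{n}{s(1),\dots,s(n)}\prod_e w(e)$. Separately, the same chain property shows that $f$ is supported on $G$ exactly when this tree spans $G$.
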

Here, a factorization tree is an edge-weighted tree that satisfies certain flow inequalities; the definition of these trees and their weights can be found in Section~\ref{sec:factorization_tree}. (Factorization trees are also equivalent to \emph{genus 0 labeled floor diagrams} as described in work of Fomin and Mikhalkin \cite{floor_diagrams}.)

To prove this result, we first use Hurwitz moves to transform a factorization on a quasi-threshold graph into a concatenation of factorizations on several intermediate star graphs. We then use a variant of the cycle lemma by Dvoretzky and Motzkin \cite{paper:cycle_lemma_Dvoretzky_Motzkin}  to count the number of factorizations at each intermediate star. We then construct a factorization tree that captures the structure of how these star factorizations interact, and then we use the weight of the factorization tree to combine the results from the stars together and produce the final count. 

The outline of the paper is as follows. In Section~\ref{sec:preliminaries}, we discuss basic techniques for counting factorizations, including partitioned permutations and Hurwitz moves, as well as quasi-threshold graphs.
In Section~\ref{sec:lattice_walk}, we introduce a lattice walk method to visualize factorizations and prove a cycle lemma to count minimal length factorizations under a specific setup. In Section~\ref{sec:factorization_tree}, we define factorization trees and use them to prove our main result. Finally, in Section~\ref{sec:conclusion}, we conjecture the divisibility result for the larger class of cographs and provide some further discussion. The appendix contains computational data for $\numfact{\id}{G}$ for graphs on at most $5$ vertices.

\section{Preliminaries}\label{sec:preliminaries}
We begin with preliminary definitions related to factorizations in the symmetric group, introducing the notion of partitioned permutations and minimal transitive factorizations. We also introduce quasi-threshold graphs and discuss the Hurwitz action on factorizations supported on such graphs.

\subsection{Factorizations}\label{subsec:factorizations}
Let $S_n$ denote the symmetric group on $n$ letters, which we assume acts on $[n] = \{1, \dots, n\}$ unless otherwise specified. We denote the set of all transpositions in $S_n$ by $T_n$, writing $s_{ij}$ for the transposition $(i \; j)$ (usually with $i<j$) and $s_i$ for the simple transposition $(i \; i+1)$.

\begin{definition}
    A \emph{factorization} of $\sigma \in S_n$ (into transpositions) is an ordered tuple $f = (t_1, \dots, t_k)$ such that $\sigma = t_1 \cdots t_k$, and $t_i \in T_n$ for all $1 \leq i \leq k$.
    
    We say that $f$ is \emph{transitive} if the subgroup generated by $\{ t_1, \dots, t_k\}$ acts transitively on $[n]$. When $k$ is minimal among all transitive factorizations of $\sigma$, we say that $f$ is a \emph{minimal transitive factorization} of $\sigma$.
\end{definition}

Note that in $S_n$, a subgroup generated by transpositions acts transitively on $[n]$ if and only if it is the entire group. As we will see below in Corollary~\ref{cor:minlength}, the minimum length of a transitive factorization of $\sigma$ is $n+m-2$, where $m$ is the number of cycles of $\sigma$.

We will identify each transposition $s_{ij} \in T_n$ with the edge $\{i,j\}$ in the complete graph $K_n$ on vertex set $[n]$. Each factorization then yields a graph as follows.

\begin{definition}
    Let $f$ be a factorization of a permutation $\sigma \in S_n$. The graph $\graphf_f$ is the (simple) graph with vertex set $[n]$ and edges $\{i,j\}$ for each transposition $s_{ij}$ appearing in $f$.

    If $G$ is any graph with vertex set $[n]$, then we say that $f$ is \emph{supported} on $G$ if $\graphf_f \subseteq G$.
\end{definition}

Note that a factorization $f$ is transitive if and only if its corresponding graph $\graphf_f$ is connected.

\subsection{Prior enumerative results}

Let $\numfact{\sigma}{G}$ denote the number of minimal transitive factorizations $f$ of $\sigma$ supported on $G$. 
The following classical result due to Hurwitz \cite{Hurwitz_ori_paper} gives a formula for $\numfact{\sigma}{K_n}$.

\begin{theorem}[Hurwitz \cite{Hurwitz_ori_paper}] \label{thm:Hurwitz}
    Suppose $\sigma \in S_n$ has cycle lengths $l_1, \dots, l_m$. Then 
    \begin{equation*}\label{Eq:complete_graph_id}
    \numfact{\sigma}{K_{n}} = (n+m-2)! \cdot n^{m-3}\cdot \frac{l_1^{l_1+1}\cdots l_m^{l_m+1}}{l_1!\cdots l_m!}.
    \end{equation*}
    In particular, $\numfact{\id}{K_{n}} = n^{n-3}(2n-2)!$.
\end{theorem}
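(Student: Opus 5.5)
This is Hurwitz's classical formula; since it is cited, the paper presumably does not reprove it. If I were to prove it, I would follow the Goulden--Jackson route via the cut-and-join equation, specialized to genus zero.

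\textbf{Setting up the recursion.} Let $H(\alpha)$ be the number of minimal transitive factorizations of a fixed permutation of cycle type $\alpha=(l_1,\dots,l_m)$, so the length is $k=n+m-2$. First I would establish minimality. Let $c(\cdot)$ denote the number of cycles. Multiplying by a transposition changes $c$ by $\pm1$. Starting from the identity, which has $n$ cycles, and reaching $\sigma$, which has $m$ cycles, the number of \emph{joins} $J$ and \emph{cuts} $C$ satisfy $J-C=n-m$. Transitivity forces the graph $\graphf_f$ to be connected. A genus (Riemann--Hurwitz) argument then gives $k\ge n+m-2$, with equality exactly when $C=m-1$ and $J=n-1$. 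The equality case corresponds to the genus-zero situation.

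Next, I would remove the last transposition $t_k$ and condition on whether it cuts or joins the cycles of $\sigma t_k$. Two cases arise.
\begin{itemize}
\item The transposition $t_k$ joins two cycles of $\sigma t_k$. The remaining factorization is then minimal transitive, or it splits into two minimal transitive factorizations on complementary supports; interleaving the two factorizations contributes a binomial coefficient.
\item The transposition $t_k$ cuts a cycle. Minimality then forces the remainder to stay transitive of minimal length.
\end{itemize}
This yields the genus-zero cut-and-join PDE for the generating function
\[
F=\sum_{\alpha} H(\alpha)\,\frac{x^n}{n!}\,\frac{p_\alpha}{\aut(\alpha)}\,\frac{u^{k}}{k!},
\]
of the form
\[
\partial_u F=\tfrac12\sum_{i,j}\bigl(ij\,p_{i+j}\,\partial_{p_i}\partial_{p_j}F+ij\,p_{i+j}\,\partial_{p_i}F\,\partial_{p_j}F+(i+j)\,p_ip_j\,\partial_{p_{i+j}}F\bigr),
\]
keeping only the genus-zero terms.

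\textbf{Solving it.} Following Goulden--Jackson, I would guess the closed form given in the statement and verify that it satisfies the PDE together with the initial condition. The cleanest way is to change variables via Lagrange inversion. Set
\[
s=x\exp\Bigl(\sum_i \tfrac{i^i}{i!}\,p_i\,s^i\Bigr),
\]
up to the $u$-scaling. The generating function of the conjectured values, $\sum (n+m-2)!\,n^{m-3}\prod_i \frac{l_i^{l_i+1}}{l_i!}\cdots$, is then expressible in terms of $s$. Verification reduces to an identity of Lagrange-type series, namely that $(u\partial_u)^3$ of the candidate matches the right-hand side. The main obstacle is this verification, specifically handling the nonlinear $\partial F\,\partial F$ term. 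I would deal with it by working with the third derivative, where the equation linearizes in the variable $s$, and then integrating, using uniqueness of solutions with the given initial data.

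As a sanity check on the specialization, take $\sigma=\id$, so $m=n$ and every $l_i=1$. The formula then gives $(2n-2)!\,n^{n-3}$. This is consistent with the tree count, since minimal factorizations of $\id$ consist of $n-1$ joins followed by $n-1$ cuts, in a suitable interleaved sense.
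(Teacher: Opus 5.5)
The paper does not prove this theorem. It is quoted from Hurwitz, and the introduction credits Goulden--Jackson with a derivation by Lagrange inversion, which is the route you sketch. The paper's own machinery does not prove the case $\sigma=\id$ either. Combining Theorem~\ref{thm:count_quasi_threshold} (with $G=K_n$) with the statement, the authors deduce that $\sum_T w(T)=n^{n-2}(n-1)!$, and they say they know no independent proof of that identity. Judged on its own, your outline has a concrete error in the join--cut step and leaves the decisive step undone.

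\textbf{The join case and the equation.} By Lemma~\ref{Lem:minlen_pp}, a minimal factorization uses only cases (a) and (c) of Lemma~\ref{Lem:multiply_transp}. So if $t_k$ joins two cycles of $\sigma t_k$, it must merge two blocks. The remainder is therefore \emph{never} transitive: it would need at least $n+(m+1)-2$ factors but has only $n+m-3$. It always splits into exactly two minimal transitive pieces.

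You say you keep ``only the genus-zero terms'', but your displayed equation contains $ij\,p_{i+j}\partial_{p_i}\partial_{p_j}F$. That term encodes exactly this impossible case, and it is incompatible with the grading. It sends $x^np_\alpha u^{n+\ell(\alpha)-2}$ to monomials $x^np_\beta u^{n+\ell(\beta)-1}$, whereas the coefficient of $x^np_\beta$ in $\partial_uF$ is a multiple of $u^{n+\ell(\beta)-3}$. Concretely, the $x^2p_1^2u^2$ term of $F$ (from $(s_{12},s_{12})$) produces a nonzero $x^2p_2u^2$ on the right with nothing to match it on the left. So no candidate can satisfy the equation as written; the term must be dropped.

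Separately, the coefficients $ij$ and $(i+j)$ are correct only if the coefficient of $x^np_\alpha u^k$ in $F$ is $H(\alpha)\,|C_\alpha|/(n!\,k!)$, where $C_\alpha$ is the conjugacy class of type $\alpha$. In other words, you must sum over all $\sigma\in S_n$. With your weight $H(\alpha)/(n!\,\aut(\alpha)\,k!)$, the cut term is already off by a factor of $2$ at $n=2$.

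\textbf{The solution step.} This is the real content of the Goulden--Jackson argument, and you only assert it. The operator you name is also the wrong one: $u\partial_u$ multiplies the $\alpha$-coefficient by $n+m-2$, so no power of it clears the factor $n^{m-3}$. The right operator is $x\partial_x$.

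Set $u=1$. This is harmless, because there $\partial_uF$ becomes $(x\partial_x+\sum_ip_i\partial_{p_i}-2)F$. Lagrange inversion then gives $(x\partial_x)^2F=\phi(s)$, where $s=xe^{\phi(s)}$ and $\phi(t)=\sum_k\frac{k^k}{k!}p_kt^k$. It follows that $x\partial_x\,\partial_{p_k}F=\frac{k^{k-1}}{k!}s^k$.

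However, applying $x\partial_x$ any number of times to $\partial_{p_i}F\,\partial_{p_j}F$ always leaves one undifferentiated factor. So the claim that the equation ``linearizes'' after three derivatives is not an argument. To finish you still need:
\begin{enumerate}
\item $\partial_{p_k}F$ itself, which you can get by integrating $s^k\,dx/x=s^{k-1}(1-s\phi'(s))\,ds$;
\item an actual verification of the corrected equation;
\item the initial condition $F|_{u=0}=xp_1$, which is what makes the genus-zero equation determine $F$ uniquely.
\end{enumerate}

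Finally, minimality needs no Riemann--Hurwitz argument: it is exactly Corollary~\ref{cor:minlength}.
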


When $G$ is the star graph $K_{1,n-1}$, Irving and Rattan \cite{min_star} give the following closed formula for $\numfact{\sigma}{K_{1,n-1}}$, which surprisingly depends only on the cycle type of $\sigma$ and not the length of the cycle containing the center of the star.
\begin{theorem} [Irving--Rattan \cite{min_star}]
    Suppose $\sigma \in S_n$ has cycle lengths $l_1, \dots, l_m$. Then
    \[\numfact{\sigma}{K_{1,n-1}} = \frac{(n+m-2)!}{n!}l_1 \cdots l_m.\]
    In particular, $\numfact{\id}{K_{1,n-1}} = \frac{(2n-2)!}{n!}$.
\end{theorem}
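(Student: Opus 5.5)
We would prove the Irving--Rattan formula directly by encoding factorizations as words. Let the center of the star be $1$. A factorization supported on $K_{1,n-1}$ is then a word $a_1 a_2 \cdots a_k$ in the leaves $\{2,\dots,n\}$, standing for $(1\,a_1)(1\,a_2)\cdots(1\,a_k)$. Transitivity means every leaf occurs at least once. We take as given the minimum length $k=n+m-2$ (Corollary~\ref{cor:minlength}).

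\textbf{Step 1: letter multiplicities.} Let the center $1$ lie in a cycle of length $l_c$. We would show that in a minimal word:
\begin{itemize}
\item each of the $l_c-1$ leaves in the center's cycle occurs exactly once;
\item for every other cycle, of length $l_i$, its leaves occur $l_i+1$ times in total, namely one leaf twice and the rest once.
\end{itemize}
These totals add up to $(l_c-1)+\sum_{i\neq c}(l_i+1)=n+m-2$, so this is forced once we know each non-central cycle needs at least $l_i+1$ letters. That lower bound should come from a standard genus/Euler characteristic count: each transposition changes the number of cycles by $\pm1$, and a non-central cycle must be \emph{entered and left} through the center.

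\textbf{Step 2: order conditions.} We would describe when a word with these multiplicities has product exactly $\sigma$. Track the image of $1$ while multiplying right to left. The letters belonging to one cycle of $\sigma$ must appear in the cyclic order dictated by that cycle. In addition, the blocks for different cycles must interleave only in a restricted way; we expect a noncrossing/nesting-type condition, readable as a lattice-walk or ballot-type constraint.

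\textbf{Step 3: counting via cyclic rotations.} The key tool is that rotation preserves the relevant data: if $t_1\cdots t_k=\sigma$, then $t_2\cdots t_kt_1=t_1\sigma t_1$, which has the same cycle type and is still a minimal transitive star factorization. So we pass to the cyclic group $\mathbb{Z}/k$ acting on words. A Dvoretzky--Motzkin-type cycle lemma should say that, in each rotation orbit of words with the Step~1 multiplicities and correct within-cycle orders, a fixed proportion of rotations produce a product of the prescribed form. Counting the arrangements, that is
\begin{itemize}
\item choosing positions for each cycle's letters,
\item choosing which leaf of each non-central cycle is doubled (this gives the factors $l_i$),
\end{itemize}
and then applying the cycle-lemma proportion should yield $\frac{(n+m-2)!}{n!}\,l_1\cdots l_m$.

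\textbf{Sanity check.} For $\sigma=\id$, each leaf appears exactly twice. There are $(2n-2)!/2^{n-1}$ such words, and the claim reduces to a fraction $2^{n-1}/n!$ of them multiplying to the identity. We would verify this on its own first.

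\textbf{Main obstacle.} The hard part is Step~2 combined with the cycle lemma: getting the exact order condition and proving that the valid rotations in each orbit have the right density. In particular, the answer must come out independent of $l_c$, which is the surprising feature of the theorem. We would first try to build a bijection that moves the center into a different cycle, or to show that each rotation orbit of length $k$ contains exactly $\frac{1}{k}\cdot$(appropriate count) valid representatives. If this fails, the fallback is induction on $n$ by deleting the last occurrence of a leaf that appears once.
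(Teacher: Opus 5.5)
\textbf{There is a genuine gap: Steps~2 and~3 are only announced, and the Step~3 mechanism does not work as proposed.}

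Your Step~1 is correct. It is the merge/split dichotomy of Lemma~\ref{Lem:minlen_pp}: the first occurrence of a leaf merges its singleton into the center's cycle, and a second occurrence detaches a cycle that is never touched again. But Steps~2 and~3 carry all of the content. Because $t_2\cdots t_kt_1=t_1\sigma t_1$, rotating a word conjugates its product by a transposition at the center. For $\sigma=\id$ this makes ``multiplies to $\id$'' rotation-invariant, so each rotation orbit is either entirely valid or entirely invalid, and no cycle-lemma density can appear. Already for $n=3$, the six words in which $2$ and $3$ each appear twice split into two orbits:
\begin{itemize}
\item $\{2233,2332,3322,3223\}$, all of which multiply to $\id$;
\item $\{2323,3232\}$, none of which does, and which has size $2$ rather than $k=4$.
\end{itemize}
So the fraction $2^{n-1}/n!$ in your sanity check is an average over orbits, not a per-orbit proportion. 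For $\sigma\neq\id$, rotation replaces $\sigma$ by $t_1\sigma t_1$, which can move the center into another cycle, and nothing in your outline controls how often $\sigma$ itself recurs in an orbit. Consequently the independence from $l_c$, which you rightly call the crux, has no argument behind it.

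\textbf{The missing idea is to rotate the sequence of lengths of the center's cycle, not the word.} This is what Theorem~\ref{thm:lattice_walk_count} does. By the dichotomy above, this sequence (with a final step $-l_c$ appended) is a good lattice walk with $n-1$ up steps $+1$ and down steps given by $\mu=\type(\sigma)$. Goodness is \emph{not} rotation-invariant. Each good walk lifts to exactly $(n-1)!$ factorizations: one per order of merging the leaves, since the splits are forced. For $\mu=1^n$, Corollary~\ref{cor:goodwalks} then gives $(2n-2)!/n!$ directly. This identity case is all the paper proves (via Theorem~\ref{thm:lattice_walk_count}, or the one-tree case of Theorem~\ref{thm:count_quasi_threshold}); the general formula is quoted from Irving--Rattan.

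For general $\sigma$, Theorem~\ref{thm:lattice_walk_count} only counts the whole class of type $\mu$, and two further steps isolate $\sigma$.

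First, count walks whose last step is $-l_c$. Let $\mu^-$ be $\mu$ with one part $l_c$ removed. Deleting that last step and prepending $+1$ turns such a walk into a sequence of type $(1^n,\mu^-)$ whose partial sums are all positive and whose total is $l_c$. A sequence of steps $\le 1$ with positive sum $s$ has exactly $s$ cyclic shifts with all partial sums positive. Hence there are $l_c\,(n+m-2)!/(n!\,\aut(\mu^-))$ such walks.

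Second, relabelings of the leaves act simply transitively on the $(n-1)!$ factorizations attached to a walk, and equivariantly on their products. These products range over all permutations of type $\mu$ with the center in an $l_c$-cycle. So each such permutation, in particular $\sigma$, receives $\aut(\mu^-)\,\Pi(\mu^-)$ of them, the order of its centralizer within the stabilizer of the center.

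Multiplying gives $\frac{(n+m-2)!}{n!}\,l_c\,\Pi(\mu^-)=\frac{(n+m-2)!}{n!}\,l_1\cdots l_m$. The factor $l_c$ lost to the symmetry count is restored by the cycle lemma, which is exactly why the answer does not depend on which cycle contains the center.
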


Note that we have the following divisibility relation between these two quantities:
\[\frac{\numfact{\id}{K_{n}}}{\numfact{\id}{K_{1,n-1}}}= n^{n-2}(n-1)!.\]

\subsection{Partitioned permutations}\label{subsec:part_perm}

To study transitive factorizations $f$, it will be important for us to keep track of the connected components of $\mathcal{G}_{f}$.

Denote the lattice of all set partitions of $[n]$ (ordered by refinement) by $\Pi_n$, which has maximum element $\hat{1} = 1 2 \cdots n$, the partition with one block. Inspired by the construction of partial permutations in \cite{paper:partial_perm}, we define a partitioned permutation $\partperm{\sigma}{\pi}$ as follows. 

\begin{definition}\label{def:partperm}
    A \emph{partitioned permutation} $\partperm{\sigma}{\pi}$ is a pair consisting of a permutation $\sigma \in S_n$ and a set partition $\pi \in \Pi_n$ such that elements in the same cycle of $\sigma$ are in the same part of $\pi$.
    
    We denote the set of all partitioned permutations $\partperm{\sigma}{\pi}$ with $\sigma \in S_n$ and $\pi \in \Pi_n$ by $\allpp_n$.
\end{definition} 

For a fixed permutation $\sigma$, we can construct a partitioned permutation $(\sigma, \pi)$ by grouping together cycles of $\sigma$ to form the blocks of $\pi$. Alternatively, given a set partition $\pi$, we can construct a partitioned permutation $(\sigma, \pi)$ by defining permutations on each part of $\pi$, which combine to form $\sigma$. We will sometimes write a partitioned permutation by writing the permutation in cycle notation and adding bars separating the cycles into blocks.

\begin{example}
    The pair $(\sigma,\pi)$, where $\sigma = (1 \ 2 \ 3)(4 \ 5)(6)(7) \in S_7$ and $\pi = 1236\mid 45 \mid 7 \in \Pi_7$, is a partitioned permutation.  We may also write $(\sigma, \pi) = (1 \ 2 \ 3) (6)  \mid  (4 \ 5)  \mid  (7)$.
\end{example}

For two partitioned permutations $(\sigma, \pi), (\sigma', \pi') \in \allpp_n$, define the product 
\[(\sigma, \pi) \cdot (\sigma', \pi') = (\sigma \cdot \sigma', \pi \lor \pi') \in \allpp_n,\]
where $\sigma \cdot \sigma'$ is the usual product of two permutations in $S_n$, and $\pi \lor \pi'$ is the common coarsening (or \emph{join}) of $\pi$ and $\pi'$ in the partition lattice $\Pi_n$. 

\begin{example}
    Let $(\sigma, \pi) = (1 \ 2 \ 3)  \mid  (4 \ 5)  \mid  (6)(7)$ and $(\sigma', \pi')=(1 \ 4) \mid (2 \ 3) \mid (5)  \mid  (6 \ 7)$. Then $(\sigma, \pi) \cdot (\sigma', \pi') = (1 \ 5 \ 4 \ 2)(3) \mid (6 \ 7). $
\end{example}

\begin{lemma}
    The multiplication $\cdot$ is well-defined and endows $\allpp_n$ with the structure of a monoid.
\end{lemma}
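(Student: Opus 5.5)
The plan is to check three things in turn. First, the product always lands in $\allpp_n$. Second, it is associative. Third, there is a two-sided identity.

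\emph{Well-definedness.} Given $(\sigma,\pi),(\sigma',\pi')\in\allpp_n$, we must show that every cycle of $\sigma\sigma'$ lies in a single block of $\pi\lor\pi'$. By Definition~\ref{def:partperm}, each cycle of $\sigma$ lies in a block of $\pi$. Hence $\sigma$ maps each block of $\pi$ to itself, and therefore maps each block of the coarser partition $\pi\lor\pi'$ to itself. By the same argument, $\sigma'$ preserves every block of $\pi\lor\pi'$. The product $\sigma\sigma'$ then also preserves every block $B$ of $\pi\lor\pi'$. So for $x\in B$, the whole orbit $\{x,\sigma\sigma'(x),(\sigma\sigma')^2(x),\dots\}$, which is the cycle of $\sigma\sigma'$ containing $x$, stays inside $B$. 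This shows $(\sigma\sigma',\pi\lor\pi')\in\allpp_n$.

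\emph{Associativity.} Both coordinates are associative on their own: composition in $S_n$ is associative, and the join in the lattice $\Pi_n$ is associative. Since the product is computed coordinatewise,
\[
\bigl((\sigma,\pi)(\sigma',\pi')\bigr)(\sigma'',\pi'')=\bigl((\sigma\sigma')\sigma'',(\pi\lor\pi')\lor\pi''\bigr)=(\sigma,\pi)\bigl((\sigma',\pi')(\sigma'',\pi'')\bigr).
\]
In other words, $\allpp_n$ is a subset of the direct product monoid $S_n\times(\Pi_n,\lor)$ that is closed under multiplication, so associativity is inherited.

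\emph{Identity.} Take $(\id,\hat 0)$, where $\hat 0$ is the partition of $[n]$ into singletons. It lies in $\allpp_n$ because every cycle of $\id$ is a singleton. Since $\hat 0\lor\pi=\pi=\pi\lor\hat 0$ and $\id\cdot\sigma=\sigma=\sigma\cdot\id$, it is a two-sided identity.

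The only step with real content is well-definedness. Its key observation is that the condition ``each cycle of $\sigma$ lies in a block of $\pi$'' is equivalent to ``$\sigma$ preserves every block of $\pi$,'' and that block-preservation passes to coarser partitions and to products. Associativity and the identity are routine.
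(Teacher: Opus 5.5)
Your proof is correct and follows essentially the same route as the paper. The paper checks well-definedness pointwise: $i$ and $\sigma'(i)$ share a block of $\pi'$, and $\sigma'(i)$ and $\sigma(\sigma'(i))$ share a block of $\pi$, so $i$ and $(\sigma\sigma')(i)$ share a block of $\pi\lor\pi'$; this is your block-preservation argument taken one step at a time, and associativity and the identity $(1)\mid\cdots\mid(n)$ then follow coordinatewise exactly as you describe.
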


\begin{proof}
Let $(\sigma, \pi), (\sigma', \pi') \in \allpp_n$. For any $i \in [n]$, $i$ and $\sigma'(i)$ lie in the same block of $\pi'$, while $\sigma'(i)$ and $\sigma(\sigma'(i)) = (\sigma \cdot \sigma')(i)$ lie in the same block of $\pi$. Therefore $i$ and $(\sigma \cdot \sigma')(i)$ lie in the same block of the common coarsening $\pi \lor \pi'$, so the product is well-defined.

Since multiplication in $S_n$ and join in $\Pi_n$ are associative, and $(1) \mid \dots \mid (n)$ acts as the identity element, $(\allpp_n, \cdot)$ forms a monoid.
\end{proof}

By replacing $[n]$ with any finite set $C$, we can likewise define partitioned permutations of $C$, which we denote by $\allpp_C$. %(We will use $\hat{1}_C$ to denote the maximum element of $\Pi_C$, the lattice of set partitions of $C$.)
If $C \subseteq D$, then by adding to any $(\sigma,\pi) \in \allpp_C$ the elements of $D \setminus C$ as $1$-cycles in singleton blocks, we can embed $\allpp_C$ into $\allpp_D$. In particular, for two sets $C$ and $C'$, this allows us to define the product of elements from $\allpp_C$ and $\allpp_{C'}$ as an element of $\allpp_{C \cup C'}$.

\begin{example}\label{Ex:A_S1_and_A_S2}
    Let 
    \begin{align*}
        \partperm{\sigma}{\pi} &= (1)  \mid  (2 \ 3)(4) \mid (5 \ 6) \in \allpp_{C},\\
        \partperm{\sigma'}{\pi'} &= (1 \ 4) \mid (3)  \mid  (5 \ 7) \mid (8) \in \allpp_{C'},
    \end{align*}
    where $C = \{1,2,3,4,5,6\}$ and $C' = \{1,3,4,5,7,8\}$.
    Then their product is \[\partperm{\sigma}{\pi} \cdot \partperm{\sigma'}{\pi'} = (1 \ 4)(2 \ 3) \mid (5 \ 7 \ 6) \mid (8)\in \allpp_{C \cup C'}.\]
\end{example}

\subsection{Minimal factorizations}\label{subsec:min_fact}
We now discuss how to extend our notion of factorizations in $S_n$ to factorizations in $\allpp_n$.

\begin{definition}\label{def_transposition}
    A \textit{transposition} in $\allpp_n$ is a partitioned permutation $\partperm{s_{ij}}{\pi_{ij}}$, where $s_{ij} = (i \ j) \in T_n$ and $\pi_{ij} = 1 \mid \dots \mid ij \mid \dots \mid n \in \Pi_n$.
\end{definition}

As an abuse of notation, we will often identify the transpositions in $S_n$ with the transpositions in $\allpp_n$ and denote them both by $s_{ij}$ or $t_a$.

The following lemma gives a description of what can happen when we multiply an element of $\allpp_n$ by a transposition. Let $\cycle(\sigma)$ denote the number of cycles in $\sigma \in S_n$, and let $\parts(\pi)$ denote the number of parts in $\pi \in \Pi_n$.

\begin{lemma}\label{Lem:multiply_transp}
    Let $\partperm{\sigma}{\pi} \in \allpp_n$, and let $s_{ij}$ be a transposition. If $\partperm{\sigma'}{\pi'} = \partperm{\sigma}{\pi} \cdot s_{ij}$, then:
    \begin{enumerate}[label=(\alph*)]
        \item $\cycle(\sigma') = \cycle(\sigma) + 1$ and $\parts(\pi') = \parts(\pi)$ if $i$ and $j$ lie in the same block of $\pi$ and the same cycle of $\sigma$;
        \item $\cycle(\sigma') = \cycle({\sigma})-1$ and $\parts(\pi') = \parts(\pi)$ if $i$ and $j$ lie in the same block of $\pi$ but different cycles of $\sigma$;
        \item $\cycle(\sigma') = \cycle({\sigma})-1$ and $\parts(\pi') = \parts(\pi)-1$ if $i$ and $j$ lie in different blocks of $\pi$ (and hence different cycles of $\sigma$).
    \end{enumerate}
\end{lemma}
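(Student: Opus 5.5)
The proof splits according to whether $i$ and $j$ lie in the same block of $\pi$, and then on the permutation side uses the classical fact about multiplying a permutation by a transposition. First I would record the partition side. Since $\pi_{ij}$ merges only $i$ and $j$, the join $\pi' = \pi \lor \pi_{ij}$ is obtained from $\pi$ as follows. If $i$ and $j$ are already in the same block, then $\pi' = \pi$, so $\parts(\pi') = \parts(\pi)$. Otherwise $\pi'$ is obtained from $\pi$ by merging the two blocks containing $i$ and $j$, so $\parts(\pi') = \parts(\pi) - 1$.

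Next I would handle the permutation side, $\sigma' = \sigma \cdot s_{ij}$. I would use the standard cut/join computation. Suppose $i$ and $j$ lie in the same cycle $(i\ a_1 \cdots a_p\ j\ b_1 \cdots b_q)$ of $\sigma$. Composing with $s_{ij}$ on the right (apply $s_{ij}$ first) sends $i \mapsto \sigma(j)$ and $j \mapsto \sigma(i)$. This splits the cycle into $(i\ b_1 \cdots b_q)$ and $(j\ a_1 \cdots a_p)$, and all other cycles are unchanged, so $\cycle(\sigma') = \cycle(\sigma)+1$. Suppose instead that $i$ and $j$ lie in different cycles $(i\ a_1\cdots a_p)$ and $(j\ b_1 \cdots b_q)$. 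The same computation joins them into the single cycle $(i\ b_1 \cdots b_q\ j\ a_1 \cdots a_p)$, giving $\cycle(\sigma') = \cycle(\sigma)-1$. I would verify the images of $i$ and $j$ explicitly once, taking care with the composition convention, although the cycle count comes out the same under either convention.

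Finally I would combine the two sides. Case (a) is "same block and same cycle": $\pi' = \pi$ and the cycle splits. Case (b) is "same block but different cycles": $\pi'=\pi$ and the cycles join. For case (c), I would invoke Definition~\ref{def:partperm}: every cycle of $\sigma$ lies inside a single block of $\pi$. Hence if $i$ and $j$ are in different blocks, they must be in different cycles, so the cycles join and the blocks merge. Every step is routine. The only point that needs care is the composition convention in the cut/join computation, and even that does not affect the counts.
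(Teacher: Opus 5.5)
Your proposal is correct and follows the paper's proof: treat the partition side via $\pi' = \pi \lor \pi_{ij}$ (either unchanged or two blocks merged) and the permutation side via the standard cut/join effect of a transposition, with case (c) using that each cycle of $\sigma$ lies within a block of $\pi$. Your explicit cycle computations are more detailed than the paper's. They also match the paper's right-to-left convention, for example $(1\ 2\ 3\ 6\ 7)s_{37} = (1\ 2\ 3)(6\ 7)$.
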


\begin{proof}
    If $i$ and $j$ lie in the same cycle of $\sigma$, then multiplying by $s_{ij}$ breaks this cycle into two cycles, so $\cycle(\sigma') = \cycle(\sigma)+1$. However, if $i$ and $j$ lie in different cycles of $\sigma$, then multiplying by $s_{ij}$ merges these two cycles, so $\cycle(\sigma') = \cycle(\sigma)-1$.

    If $i$ and $j$ lie in the same block of $\pi$, then $\pi' = \pi \lor \pi_{ij} = \pi$, so $\parts(\pi') = \parts(\pi)$. Otherwise, $i$ and $j$ lie in different blocks, so $\pi'$ is formed from $\pi$ by merging these two blocks and thus $\parts(\pi') = \parts(\pi)-1$.
\end{proof}

Given any factorization $f$ of $\sigma \in S_n$, the product of the corresponding sequence of transpositions in $\allpp_n$ is the element $(\sigma, \pi) \in \allpp_n$, where the blocks of $\pi$ give the connected components of $\graphf_f$; we call this sequence a factorization of $(\sigma, \pi)$.

We are particularly interested in factorizations of $\partperm{\sigma}{\pi}$ of minimal length (or \emph{minimal factorizations}). Let $\len \partperm{\sigma}{\pi}$ denote the minimal length of a factorization $f$ of $\partperm{\sigma}{\pi}$. Then we have the following lemma.

\begin{lemma}\label{Lem:minlen_pp}
    Let $\partperm{\sigma}{\pi} \in \allpp_n$. Then
    \begin{equation*}\label{eq:min_len_pp}
        \len \partperm{\sigma}{\pi} = n+\cycle(\sigma)-2\parts(\pi).
    \end{equation*}
    Furthermore, if $f$ is a minimal factorization of $\partperm{\sigma}{\pi}$, then when we multiply transpositions from left to right in $f$, only cases (a) and (c) in Lemma~\ref{Lem:multiply_transp} occur.     
\end{lemma}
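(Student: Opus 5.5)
We track a potential function along a factorization, using Lemma~\ref{Lem:multiply_transp}. For $\partperm{\sigma}{\pi} \in \allpp_n$ set
\[\Phi\partperm{\sigma}{\pi} = n + \cycle(\sigma) - 2\parts(\pi).\]
For the identity element $(1)\mid\cdots\mid(n)$ we have $\cycle = \parts = n$, so $\Phi = 0$. In each case of Lemma~\ref{Lem:multiply_transp}, multiplying on the right by a transposition changes $\Phi$ as follows:
\begin{enumerate}[label=(\alph*)]
    \item by $+1$;
    \item by $-1$;
    \item by $-1+2=+1$.
\end{enumerate}
Hence $\Phi$ changes by exactly $\pm 1$ at each step. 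Consider any factorization $f=(t_1,\dots,t_k)$ of $\partperm{\sigma}{\pi}$, and let $a,b,c$ count the steps of types (a), (b), (c) in left-to-right multiplication. Then
\[\Phi\partperm{\sigma}{\pi} = a - b + c \le k.\]
This gives the lower bound $\len\partperm{\sigma}{\pi} \ge n+\cycle(\sigma)-2\parts(\pi)$. Equality $k=\Phi$ holds exactly when $b=0$, that is, when only cases (a) and (c) occur. This proves the second assertion once the length formula is established.

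For the upper bound we construct a factorization of length exactly $\Phi$, block by block. Take a block $B$ of $\pi$ with $|B|=p$, and let $\sigma|_B$ have cycles of lengths $l_1,\dots,l_r$.
\begin{itemize}
    \item Write each cycle of length $l_i$ as a product of $l_i-1$ transpositions. This uses $p-r$ transpositions and gives $\sigma|_B$ with each cycle as its own component.
    \item Choose a spanning tree on the $r$ cycles, representing each edge by two vertices in different cycles.
    \item For each tree edge, insert the pair $s_{uv}s_{uv}$, which is the identity permutation but joins the two blocks. This uses $2(r-1)$ transpositions.
\end{itemize}
The block $B$ then contributes $(p-r)+2(r-1) = p+r-2$ transpositions. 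The resulting sequence multiplies to $\sigma|_B$ with partition the single block $B$. The products for different blocks commute and join correctly in $\allpp_n$. Summing over blocks gives total length
\[\sum_B (|B| + r_B - 2) = n+\cycle(\sigma)-2\parts(\pi).\]

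The only step needing care is checking that the doubled transpositions really merge partition blocks without changing $\sigma$. This follows from the definition of the product with the join, since $\pi_{uv}\lor\pi_{uv}=\pi_{uv}$ and $s_{uv}^2=\id$. Everything else is the bookkeeping above.
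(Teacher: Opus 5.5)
Your proof is correct and follows the paper's argument: the same potential $n+c(\sigma)-2\ell(\pi)$, which changes by $+1$ in cases (a) and (c) and by $-1$ in case (b), gives both the lower bound and (via $k-\Phi=2b$) the characterization of minimal factorizations. The only difference is the upper-bound construction. You build each cycle separately and then join the cycles of a block using doubled transpositions $s_{uv}s_{uv}$, whereas the paper merges all cycles of a block into one long cycle with $|B|-1$ transpositions and then splits it with $r_B-1$ more; both use $|B|+r_B-2$ transpositions per block.
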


\begin{proof} 
Suppose $f = (t_1, \dots, t_k)$ is a factorization of $\partperm{\sigma}{\pi}$, and let $\partperm{\sigma_i}{\pi_i} = t_1\cdots t_i$ with $1 \le i \le k$. Let 
\[l_i =n + \cycle(\sigma_i) - 2 \parts(\pi_i). \]
Since $(\sigma_0, \pi_0) = (1) \mid (2) \mid \dots \mid (n)$, we have $l_0 = n+n-2n = 0$.

If multiplying $(\sigma_i, \pi_i)$ by $t_{i+1}$ is as in case (a) or (c) of Lemma~\ref{Lem:multiply_transp}, then we have $l_{i+1} = l_{i}+1$; if it is as in case (b), then we have $l_{i+1} = l_{i}-1$. Thus $l_i$ can increase by at most $1$ at each step, so any factorization of $\partperm{\sigma}{\pi}$ requires at least $l_k = n+\cycle(\sigma) - 2\parts(\pi)$ transpositions. We will show that a factorization of this length exists, from which the result will follow.

First, for each block $B_i$ of $\pi$, we multiply $|B_i|-1$ transpositions to form a single cycle that is a concatenation of all the cycles of $\sigma$ in this block. This requires $\sum_i (|B_i|-1) = n - \parts(\pi)$ transpositions to form all $\parts(\pi)$ cycles. Next, in each block, we multiply by the appropriate transpositions to break this cycle into the cycles of $\sigma$. This requires $\cycle(\sigma) - \parts(\pi)$ additional transpositions for a total of $n+ \cycle(\sigma) - 2\parts(\pi)$ transpositions.
\end{proof}

\begin{example}\label{ex:construct_min_pp}
    We explicitly construct a minimal factorization of $(\sigma, \pi) = (1 \ 2 \ 3) (6 \ 7)  \mid  (4 \ 5)  \mid  (8) \in \allpp_8$ according to the proof of Lemma~\ref{Lem:minlen_pp}. 
    We first build a full cycle on each block of $\pi$ such as $(1 \ 2 \ 3 \ 6 \ 7)\mid(4 \ 5)\mid(8)$ using 5 transpositions: $(s_{17}, s_{16},s_{13},s_{12},s_{45})$. Since $(1 \ 2 \ 3 \ 6 \ 7)s_{37} = (1 \ 2 \ 3)(6 \ 7),$ it follows that $(s_{17}, s_{16},s_{13},s_{12},s_{45},s_{37})$ is a minimal factorization of $\partperm{\sigma}{\pi}$.
\end{example}

From Lemma~\ref{Lem:minlen_pp}, we obtain the length of a minimal transitive factorization of a permutation as a special case; see also \cite{Goulden-Jackson} for an alternative proof. 

\begin{corollary} \label{cor:minlength}
    The length of a minimal transitive factorization of $\sigma \in S_n$ is $\len(\sigma) = n+\cycle(\sigma)-2$. In particular, if $\sigma$ is the identity, then $\len(\id) = 2n-2$.
\end{corollary}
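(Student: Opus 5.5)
This is a direct specialization of Lemma~\ref{Lem:minlen_pp}. I would apply it to the partitioned permutation $(\sigma,\hat{1})$, where $\hat{1}$ is the one-block partition. This pair is a valid element of $\allpp_n$, since every cycle of $\sigma$ trivially lies in the single block.

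The first step is to identify transitive factorizations with factorizations of $(\sigma,\hat{1})$. Take any factorization $f=(t_1,\dots,t_k)$ of $\sigma$ in $S_n$. Multiplying the corresponding transpositions in $\allpp_n$ gives $(\sigma,\pi)$, where the blocks of $\pi$ are the connected components of $\graphf_f$. As noted after the definition of $\graphf_f$, $f$ is transitive exactly when $\graphf_f$ is connected, that is, exactly when $\pi=\hat{1}$. So the transitive factorizations of $\sigma$ are precisely the factorizations of $(\sigma,\hat{1})$ in $\allpp_n$, and the minimum length over the first set equals $\len(\sigma,\hat{1})$.

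The second step is to evaluate this with Lemma~\ref{Lem:minlen_pp}, using $\parts(\hat{1})=1$:
\[
\len(\sigma,\hat{1}) = n+\cycle(\sigma)-2 .
\]
For $\sigma=\id$ we have $\cycle(\id)=n$, which gives $2n-2$.

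There is no real obstacle here. The only point needing care is the correspondence between components of $\graphf_f$ and blocks of the product partition. It follows by induction from the join rule: $\pi\lor\pi_{ij}$ merges the blocks of $i$ and $j$ exactly as adding the edge $\{i,j\}$ merges components.
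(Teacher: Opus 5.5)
Your proposal is correct and matches the paper's proof: you identify transitive factorizations of $\sigma$ with factorizations of $(\sigma,\hat{1})$ in $\allpp_n$ and apply Lemma~\ref{Lem:minlen_pp} with $\parts(\hat{1})=1$. Your extra remark, that the blocks of the product partition are the components of $\graphf_f$ by the join rule, spells out a fact the paper states without proof.
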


\begin{proof}
    Transitive factorizations of $\sigma \in S_n$ are just factorizations of $\partperm{\sigma}{\hat{1}} \in \allpp_n$. Since $\parts(\hat{1})=1$, we obtain $\len\partperm{\sigma}{\hat{1}} = n+\cycle(\sigma)-2$ from Lemma~\ref{Lem:minlen_pp}.
\end{proof}

\subsection{Quasi-threshold graphs}\label{subsec:quasi_threshold_graph}
%some more introductions
We will mainly study factorizations supported on a class of graphs called quasi-threshold graphs, also known as trivially perfect graphs. See, for instance, \cite{quasi_threshold_graph} for more background on quasi-threshold graphs, including alternative characterizations and special properties.

\begin{definition}
    Given a poset $P$, its \emph{comparability graph} $G(P)$ is the graph on vertex set $P$ with an edge connecting $p, q \in P$ if and only if $p$ and $q$ are comparable in $P$.

    A \emph{rooted tree} is a poset whose Hasse diagram is a tree with a unique maximal element called the \emph{root}. A \emph{rooted forest} is a disjoint union of rooted trees. 
    
    A graph $G$ is a \textit{quasi-threshold graph} if it is the comparability graph of a rooted forest.
\end{definition}

From this definition, one can see that any induced subgraph of a quasi-threshold graph $G$ is again quasi-threshold, as any induced subposet of a rooted forest is also a rooted forest.

\begin{remark}
    Quasi-threshold graphs can also be constructed recursively from single-vertex graphs using the operations of (a) disjoint union and (b) adding a new vertex adjacent to all existing vertices \cite{quasi_threshold_graph}.
\end{remark}

We will assume throughout that all quasi-threshold graphs are obtained from rooted forests that are \textit{naturally labeled}, meaning that the partial order on the vertices is compatible with the usual ordering on $[n]$; see Figure~\ref{fig:graph_to_tree} for an example. This assumption can be made without loss of generality as it will not affect the factorization counts after relabeling appropriately.

\begin{figure}
  \centering
  \begin{tikzpicture}[
node/.style={circle, draw=black, thick, minimum size=4mm, inner sep=2.5pt},
]
%Nodes
\node[node] at (0,0) (1) {6};
\node[node] at (1.5,0) (2) {5};
\node[node] at (1.2,-1.2) (4) {3};
\node[node] at (0,-1.5) (5) {2};
\node[node] at (-1.2, -1.2) (3) {4};
\node[node] at (-1.5,0) (6) {1};

%Lines
\draw[-] (1) -- (2);
\draw[-] (1) -- (3);
\draw[-] (1) -- (4);
\draw[-] (1) -- (5);
\draw[-] (1) -- (6);
\draw[-] (4) -- (2);
\draw[-] (3) -- (5);
\draw[-] (3) -- (6);

\node at (0,-2.5) {$G$};

\begin{scope}[shift={(5,-1.5)}]
%Nodes
\node[node] at (0,2.3) (1) {6};
\node[node] at (1,1.3) (2) {5};
\node[node] at (-1, 1.3) (3) {4};
\node[node] at (1,0) (4) {3};
\node[node] at (-0.4,0) (5) {2};
\node[node] at (-1.6,0) (6) {1};

%Lines
\draw[-] (1) -- (2);
\draw[-] (1) -- (3);
\draw[-] (2) -- (4);
\draw[-] (3) -- (5);
\draw[-] (3) -- (6);
\node at (0,-1) {$T$};
\end{scope}
\end{tikzpicture}
\caption{A quasi-threshold graph $G$, obtained as the comparability graph of the (naturally labeled) rooted tree $T$.}
\label{fig:graph_to_tree}
\end{figure}

\subsection{Hurwitz action in quasi-threshold graphs} \label{subsec:Hurwitz}

The Hurwitz action, first introduced in \cite{Hurwitz_ori_paper}, is an action on factorizations (of a general group) that has been well-studied in finite Coxeter groups (see \cite{fin_coxeter_Hwz_action}) and, more generally, in complex reflection groups (see \cite{complex_ref_gps_Hwz_action}). It is a common tool for counting transitive factorizations; see \cite{symm_gp_Hwz_equiv, coxeter_count_reflections} for more details. 

\begin{definition}
    Let $f = (t_1, t_2, \dots, t_k)$  be a factorization of $\sigma \in S_n$. The \emph{Hurwitz move} $R_i$ acts on $f$ by 
    \[R_if = R_i(t_1, \dots, t_i,t_{i+1},\dots, t_k) = (t_1, \dots,t_{i+1},t_{i+1}^{-1}t_it_{i+1}, \dots, t_k).\]
    The \emph{inverse Hurwitz move} $R_i^{-1}$ is defined similarly:
    \[R_i^{-1}(t_1, \dots, t_i,t_{i+1},\dots, t_k) = (t_1, \dots,t_it_{i+1}t_i^{-1}, t_i, \dots, t_k).\]
\end{definition}

Observe that if $f$ is a factorization of $(\sigma, \pi)$, then so are $R_i f$ and $R_i^{-1} f$. While Hurwitz moves can be defined for general groups, note that in our case we have $t_i=t_i^{-1}$ for all $i$ since $t_i$ is a transposition.

The following lemma follows from a straightforward calculation.
\begin{lemma}\label{lem:braid_relations}
    Hurwitz moves satisfy the braid relations:
    \begin{enumerate}[label=(\alph*)]
        \item $R_iR_{i+1}R_i = R_{i+1}R_iR_{i+1}$ for all $i$; 
        \item $R_iR_j = R_jR_i$ for $|i-j| \ge 2.$ 
    \end{enumerate}
\end{lemma}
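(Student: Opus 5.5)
Both identities can be checked by evaluating each side on an arbitrary factorization $f=(t_1,\dots,t_k)$ and comparing the resulting tuples entry by entry. All the moves involved act on at most three consecutive positions and leave the others fixed, so only those positions need to be tracked. Since each $t_a$ is a transposition, $t_a^{-1}=t_a$, and it is convenient to write $x^y = y^{-1}xy = yxy$. With this notation, $R_i$ replaces the adjacent pair $(x,y)$ in positions $i,i+1$ by $(y,x^y)$.

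For (b), suppose $|i-j|\ge 2$. Then $R_i$ modifies only positions $i,i+1$, and $R_j$ modifies only positions $j,j+1$. These index sets are disjoint, and each move reads only the entries in its own two positions. Applying them in either order therefore gives the same tuple, so $R_iR_j f = R_jR_i f$.

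For (a), it suffices to track positions $i,i+1,i+2$, holding $(x,y,z)$. The right-hand operator on the left side of a composition acts last, so $R_iR_{i+1}R_i$ means: apply $R_i$, then $R_{i+1}$, then $R_i$.
\begin{align*}
R_iR_{i+1}R_i:\quad (x,y,z) &\mapsto (y,x^y,z)\mapsto (y,z,x^{yz}) \mapsto (z,y^z,x^{yz}),\\
R_{i+1}R_iR_{i+1}:\quad (x,y,z) &\mapsto (x,z,y^z)\mapsto (z,x^z,y^z) \mapsto (z,y^z,(x^z)^{y^z}).
\end{align*}
The two results agree once we show that $(x^z)^{y^z}=x^{yz}$. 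Expanding, $(x^z)^{y^z} = (y^z)^{-1}x^z y^z = z^{-1}y^{-1}z\,z^{-1}xz\,z^{-1}yz = z^{-1}y^{-1}xyz = x^{yz}$. This is an identity in any group, so it does not depend on the entries being transpositions.

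The only point that needs care is keeping the order of composition and the conjugation convention consistent. The underlying reason the computation works is that $x\mapsto x^g$ is a right action, so $(x^a)^b = x^{ab}$.
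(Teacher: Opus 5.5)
Your proof is correct. It is the same direct entry-by-entry verification that the paper only alludes to as ``a straightforward calculation''; both sides of (a) give $(z, y^z, x^{yz})$, using $(x^a)^b = x^{ab}$.

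One phrase is worded confusingly: in $R_iR_{i+1}R_i f$ the rightmost operator acts first. Both words in (a) are palindromes, so this has no effect on your computation.
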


Because of this, we can define a Hurwitz operator for any permutation $w \in S_k$ by finding a reduced expression for $w$ as a minimal product of simple transpositions, say $w = s_{i_1}\cdots s_{i_p}$. We then define $R_w = R_{i_1}\cdots R_{i_p}$.  Since Hurwitz moves satisfy the braid relations, the action of the resulting $R_w$ does not depend on the choice of reduced expression for $w$ by Matsumoto's theorem.

We next prove a lemma that describes how certain Hurwitz moves affect factorizations supported on a quasi-threshold graph. It will be useful to first define the following notions of rank.

\begin{definition}
    The \emph{rank} of a transposition $s_{ij}$ is $\rank(s_{ij}) = \max\{i,j\}$.

    The \emph{rank sequence} of a factorization $f = (t_1, \dots, t_k)$ is $\rank(f) = (\rank(t_1), \dots, \rank(t_k))$. 
\end{definition}

\begin{lemma}\label{Lem:one_Hwz_move}
    Let $f = (t_1, t_2, \dots, t_k)$ and $f' = (t_1', t_2', \dots, t_k')$ be factorizations such that $f' = R_i f$ for some $i$. If $\rank(t_i) > \rank(t_{i+1})$ or $\rank(t'_i) < \rank(t'_{i+1})$, then
    \begin{enumerate}[label=(\alph*)]
        \item $\rank(t'_i) = \rank(t_{i+1})$ and $\rank(t'_{i+1}) = \rank(t_i)$; and
        \item for any quasi-threshold graph $G$, $f$ is supported on $G$ if and only if $f'$ is.
    \end{enumerate}
\end{lemma}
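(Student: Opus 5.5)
The plan is a direct case analysis on how $t_i$ and $t_{i+1}$ overlap, using the explicit formula $R_i f = (\dots, t_{i+1}, t_{i+1}t_it_{i+1}, \dots)$ (recall $t_{i+1}^{-1}=t_{i+1}$). Only positions $i$ and $i+1$ change, so it suffices to compare the pairs $(t_i,t_{i+1})$ and $(t'_i,t'_{i+1})$. I will first handle the hypothesis $\rank(t_i)>\rank(t_{i+1})$. The hypothesis $\rank(t'_i)<\rank(t'_{i+1})$ then follows by the symmetric argument applied to $f=R_i^{-1}f'$, where $t_{i+1}=t'_i$ and $t_i=t'_it'_{i+1}t'_i$.

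\emph{Case 1: $t_i$ and $t_{i+1}$ commute.} This means they are equal or have disjoint supports. Then $t'_i=t_{i+1}$ and $t'_{i+1}=t_i$, so the ranks are swapped. The edge sets involved are identical, so (b) is trivial.

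\emph{Case 2: $t_i$ and $t_{i+1}$ share exactly one letter.} Write $t_i=(a\ b)$ and $t_{i+1}=(b\ c)$ with $a,b,c$ distinct. Then $t'_i=(b\ c)$ and $t'_{i+1}=(b\ c)(a\ b)(b\ c)=(a\ c)$. The assumption $\max\{a,b\}>\max\{b,c\}$ forces $a$ to be the largest of $a,b,c$. Hence $\rank(t'_{i+1})=a=\rank(t_i)$ and $\rank(t'_i)=\rank(t_{i+1})$, which gives (a). In the dual setting, the hypothesis $\rank(t'_i)<\rank(t'_{i+1})$ with $t'_{i+1}=(a\ c)$ and $t'_i=(b\ c)$ likewise forces $a$ to be the maximum.

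For (b), I use the naturally labeled forest $P$ underlying $G$, in which a larger label that is comparable to a smaller one is an ancestor of it. The key fact is the following. Suppose $x$ is the largest of three distinct vertices $x,y,z$, the vertices $y,z$ are comparable, and $x$ is comparable to one of $y,z$. Then $x$ is comparable to the other.

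To prove this, say $x$ is comparable to $y$, so $x$ is an ancestor of $y$. If $z$ is a descendant of $y$, it is a descendant of $x$. If $z$ is an ancestor of $y$, then $x$ and $z$ both lie in the chain of ancestors of $y$ in a rooted tree, which is totally ordered, so they are comparable. The case where $x$ is comparable to $z$ is symmetric.

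Applying the fact with $x=a$ gives both directions of (b):
\begin{itemize}
\item If $f$ is supported on $G$, then $\{a,b\},\{b,c\}\in G$, so $\{a,c\}\in G$.
\item If $f'$ is supported on $G$, then $\{b,c\},\{a,c\}\in G$, so $\{a,b\}\in G$.
\end{itemize}

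The main point needing care is the rank bookkeeping in Case 2: showing that the hypothesis pins down $a$ as the maximum in both the forward and the inverse formulation. That is exactly what makes the forest chain argument applicable; the rest is routine.
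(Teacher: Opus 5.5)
Your proposal is correct and follows essentially the same route as the paper. Like the paper, you split on whether $t_i$ and $t_{i+1}$ commute, write $t_i=(a\ b)$ and $t_{i+1}=(b\ c)$ otherwise, show that either rank hypothesis forces $a$ to be the maximum, and use the fact that ancestors of a vertex in a rooted forest form a chain to get comparability of the third pair. The only cosmetic difference is that you state the forest argument as one symmetric fact that gives both directions of (b) at once, whereas the paper proves the forward direction and calls the converse similar.
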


\begin{proof} 
    If $t_i$ and $t_{i+1}$ commute, then $t'_i = t_{i+1}$ and $t'_{i+1} = t_i$, so both (a) and (b) are clear. Otherwise, suppose $t_i = (a \ b)$ and $t_{i+1} = t_i' = (b \ c)$, so that $t_{i+1}' = t_{i+1}t_it_{i+1} = (a \ c)$. If $\rank(t_i) > \rank(t_{i+1})$, then $\max\{a,b\} > \max\{b,c\}$, so we must have $a > b$ and $a > c$; we get the same inequalities if instead $\rank(t_i') < \rank(t'_{i+1})$. Hence $\rank(t_i) = a = \max(a,c) = \rank(t_{i+1}t_it_{i+1})$, proving (a).

    For (b), let $G$ be the comparability graph of the (naturally labeled) rooted forest $F = (F, \prec)$. If $f$ is supported on $G$, then $a$ and $b$ are comparable in $F$, as are $b$ and $c$. We must then have either the chain $a\succ b \succ c$ or $a \succ c \succ b$ in $F$. (If $a \succ b$ and $c \succ b$, then $a$ and $c$ must be comparable since $F$ is a rooted forest.) Therefore $a$ and $c$ are comparable in $F$, so the edge $\{a,c\}$ exists in $G$, implying $f'$ is supported on $G$. The converse direction is similar.
\end{proof}

One way of interpreting Lemma~\ref{Lem:one_Hwz_move}(a) is that such Hurwitz moves perform a simple transposition on the rank sequence of $f$. We will therefore be able to use Hurwitz moves to sort a factorization by rank.

\begin{definition}

    Let $f= (t_1, \dots, t_k)$ be a factorization with rank sequence $r = (r_1, \dots, r_k)$. The \emph{standardizing permutation} $w^{(f)} = w_1 \dots w_k \in S_k$ (in one-line notation) is the unique permutation such that if $w_i < w_j$, then $r_i\le r_j$; if furthermore $r_i = r_j$, then $i<j$. 

    We say $f$ is in \emph{standard form} if its rank sequence is weakly increasing (or equivalently, if $w^{(f)} = \id$).
\end{definition}

In other words, the one-line notation of $w^{(f)}$ labels the positions of the rank sequence in weakly increasing order, breaking ties from left to right.
    
\begin{example}\label{ex:permute_rank} The factorization $\Tilde{f} = (s_{14}, s_{24},s_{35},s_{26},s_{56})$ has rank sequence $\rank(\Tilde{f}) = (4,4,5,6,6)$ and is thus in standard form. 

    Consider the factorization $f= (s_{16},s_{35},s_{14},s_{56},s_{24})$. Then $\rank(f) = (6,5,4,6,4)$ and $w^{(f)} = 43152$. The standardizing permutation $w^{(f)}$ permutes the entries of $\rank(f)$ to $(4,4,5,6,6)$.

    Note that both these factorizations are supported on the graph $G$ shown in Figure~\ref{fig:graph_to_tree}. 
\end{example}

As in our discussion after Lemma~\ref{lem:braid_relations}, one can use a reduced expression for $w^{(f)}$ to obtain a sequence of Hurwitz moves $R_{w^{(f)}}$ that standardizes $f$.

\begin{lemma}\label{lem:std_form}
    Suppose $f$ is a factorization with standardizing permutation $w^{(f)}$. Then the factorization $\Tilde{f} = R_{w^{(f)}} f$ is in standard form. Moreover, for any quasi-threshold graph $G$, $f$ is supported on $G$ if and only if $\tilde{f}$ is.

    Moreover, for any factorization $\Tilde{f}$ in standard form, there is a bijection between distinct permutations of the sequence $\rank(\Tilde{f})$ and factorizations $f$ such that $R_{w^{(f)}} f = \Tilde{f}$.
\end{lemma}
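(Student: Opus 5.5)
We prove the first claim by induction on the length $\ell(w^{(f)})$ of the standardizing permutation, peeling off one simple transposition at a time. If $w^{(f)} \neq \id$, the rank sequence $r = \rank(f)$ has a descent $r_i > r_{i+1}$ for some $i$. By Lemma~\ref{Lem:one_Hwz_move}(a), the factorization $f_1 = R_i f$ has rank sequence $s_i r$, with entries $i$ and $i+1$ swapped. By Lemma~\ref{Lem:one_Hwz_move}(b), $f_1$ is supported on a quasi-threshold graph $G$ if and only if $f$ is.

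We then check the combinatorial identity $w^{(f)} = w^{(f_1)} s_i$ with $\ell(w^{(f)}) = \ell(w^{(f_1)}) + 1$. Swapping two adjacent positions that form a strict descent changes the stable standardization by exactly this right multiplication, and it removes one inversion. Hence a reduced word for $w^{(f_1)}$ followed by $s_i$ is a reduced word for $w^{(f)}$, so $R_{w^{(f)}} f = R_{w^{(f_1)}} R_i f = R_{w^{(f_1)}} f_1$ (with the paper's convention for how $R_w$ is composed). Induction then gives that $\tilde f$ is in standard form and that support on $G$ is preserved at each step. Matsumoto's theorem (via Lemma~\ref{lem:braid_relations}) ensures that $R_w$ does not depend on the reduced word chosen, so this descent-by-descent procedure computes $R_{w^{(f)}}$.

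For the bijection, fix $\tilde f$ in standard form with weakly increasing rank sequence $\tilde r$. Given a rearrangement $r$ of $\tilde r$, let $w$ be its standardizing permutation. Since $R_w$ is invertible, set $f = R_w^{-1}\tilde f$. We must show that $\rank(f) = r$, so that $w^{(f)} = w$ and the map $f \mapsto \rank(f)$ is inverse to $r \mapsto R_{w(r)}^{-1}\tilde f$. For this, apply the inverse moves one at a time along a reduced word, building up inversions. Each step applies $R_i^{-1}$ to a factorization $g$ whose ranks at positions $i$ and $i+1$ satisfy $\rank(g_i) < \rank(g_{i+1})$. Writing $g = R_i f'$ with $f' = R_i^{-1} g$, the hypothesis ``$\rank(t'_i) < \rank(t'_{i+1})$'' of Lemma~\ref{Lem:one_Hwz_move} applies, so the ranks simply swap. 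Thus each inverse move creates exactly one new inversion and the rank sequence tracks $w$, ending at $r$.

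Conversely, any $f$ with $R_{w^{(f)}} f = \tilde f$ is recovered from its rank sequence in this way, which gives injectivity. The main obstacle is the bookkeeping that the order of composition in $R_w = R_{i_1}\cdots R_{i_p}$ matches the order in which descents are removed. Equivalently, at every intermediate stage the adjacent pair being swapped must be a strict ascent or descent, never a tie. Ties are excluded because the standardization breaks ties left to right, so equal ranks are never inverted.
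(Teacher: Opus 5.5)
Your proof is correct and follows essentially the same route as the paper. In both, Lemma~\ref{Lem:one_Hwz_move} is applied at strict descents along a reduced word for $w^{(f)}$, and the bijection comes from $f = R_w^{-1}\tilde f$, with each inverse move swapping a strict ascent. The only difference is that the paper fixes one convenient reduced word (moving the lowest ranks leftward first), whereas you induct on $\ell(w^{(f)})$ and remove an arbitrary descent; this makes the identity $w^{(f)} = w^{(f_1)}s_i$ and the exclusion of ties a bit more explicit than in the paper.
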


\begin{proof}
    Let us choose a reduced expression for $w^{(f)} = s_{i_1}\cdots s_{i_p}$ so that applying $R_{w^{(f)}} = R_{i_1}\cdots R_{i_p}$ to $f$ first moves the elements of lowest rank $r$ to the left (past elements of strictly higher rank as per Lemma~\ref{Lem:one_Hwz_move}), then elements of rank $r+1$, and so forth. The result is a factorization $\Tilde f = R_{w^{(f)}} f$ in standard form whose rank sequence is that of $f$ arranged in weakly increasing order. By Lemma~\ref{Lem:one_Hwz_move}, $\Tilde{f}$ is supported on a quasi-threshold graph $G$ if and only if $f$ is supported on $G$.
    
    Conversely, for any permutation of $\rank(\Tilde f)$, any factorization with this permuted rank sequence has the same standardizing permutation $w$. Then the unique factorization $f$ with this permuted rank sequence standardizing to $\tilde{f}$ is $f = R_w^{-1} \tilde{f}$ (which by Lemma~\ref{Lem:one_Hwz_move} will only involve moving elements to the right past ones of strictly higher rank).
\end{proof}

It follows immediately from Lemma~\ref{lem:std_form} that if $\Tilde{f}$ is in standard form with $k_i$ transpositions of rank $i$ (with $k=\sum_{i=1}^r k_i$), then there are $\binom{k}{k_1, \dots, k_r}$ factorizations $f$ such that $R_{w^{(f)}} f = \Tilde{f}$. 

%We now give an example of obtaining $R_{w^{(f)}} f$ from $f$ using the standardizing permutation $w^{(f)}$. 

\begin{example}
    Let $f= (s_{16},s_{35},s_{14},s_{56},s_{24})$ as in Example~\ref{ex:permute_rank}. Then $w^{(f)} = 43152 = s_3s_2s_3s_4s_1s_2$. Thus we can compute $\Tilde{f}$ by applying the corresponding Hurwitz moves (from right to left) as follows:
    \[
    \begin{split}
        f = (s_{16},s_{35},s_{14},s_{56},s_{24}) &\xrightarrow{R_2}\\
         (s_{16},s_{14},s_{35},s_{56},s_{24}) &\xrightarrow{R_1}\\
         (s_{14},s_{46},s_{35},s_{56},s_{24}) &\xrightarrow{R_4}\\
         (s_{14},s_{46},s_{35},s_{24},s_{56}) &\xrightarrow{R_3}\\
         (s_{14},s_{46},s_{24},s_{35},s_{56}) &\xrightarrow{R_2}\\
         (s_{14},s_{24},s_{26},s_{35},s_{56}) &\xrightarrow{R_3}\\
         (s_{14},s_{24},s_{35},s_{26},s_{56}) &= \Tilde{f}.
    \end{split}
    \]
\end{example}

By considering standardized factorizations, we will be able to isolate the parts of each factorization with a given rank. In the next section, we will introduce the key tool for counting these parts of fixed rank.

\section{Lattice walks and factorizations}\label{sec:lattice_walk}

In this section, we will describe an important lemma on lattice walks based on the well-known \emph{cycle lemma} of Dvoretzky and Motzkin \cite{paper:cycle_lemma_Dvoretzky_Motzkin}; see also \cite{paper:cycle_lemma} for further applications. We will then show how this lemma can be used to count factorizations of a certain special form.

\subsection{Lattice walks and the cycle lemma}

To describe the variation of the cycle lemma we need, we first define a particular type of lattice walk.

\begin{definition}
    A \emph{lattice walk} of length $k$ is a finite sequence $\lattice = (l_1, \ldots, l_k)$ of nonzero integers (called \emph{steps}) summing to $-1$. We say that the $i$th step is an \emph{up step} if $l_i > 0$ and a \emph{down step} if $l_i < 0$.

    The \emph{type} of $\lattice$ is the pair of integer partitions $(\lambda, \mu)$, where the parts of $\lambda$ and $\mu$ are the magnitudes of the up and down steps of $\lattice$, respectively.
\end{definition}

\begin{definition}
    For a lattice walk $\lattice = (l_1, \dots, l_k)$, its \emph{heights} $(h_0, h_1, \dots, h_k)$ are defined by $h_0 = 1$ and $h_i = h_{i-1} + l_i$ for $i= 1, \dots, k$.

    We say that $\lattice$ is \emph{good} if $h_i > 0$ for all $0\le i\le k-1$ (and $h_k = 0$); otherwise, we say that $\mathscr{L}$ is \emph{bad}.
\end{definition}

We will visualize a lattice walk by drawing its \emph{graph} formed by connecting the points $(i, h_i)$ for $i=0, 1, \dots, k$. Then $\lattice$ is good if its graph remains strictly above the $x$-axis before ending at $(k,0)$. 

\begin{example} \label{ex:latticewalks}
    Figure~\ref{fig:lattice_walk2} gives an example of a good lattice walk $\lattice_1 = (3,-2,2,-1,-2,1,-2)$ and a bad lattice walk $\lattice_2 = (2,-1,-2,1,-2,3,-2)$. Both walks have type $(321,2221)$.
\end{example}

\begin{figure}
\begin{tikzpicture}[scale=.9]

  \draw[thick, ->] (0,0) -- (7.5,0);
  \draw[thick, ->] (0,0) -- (0,4.5);
  \foreach \x in {0,1,2,3,4,5,6,7}
  \draw (\x cm,1pt) -- (\x cm,-1pt) node[anchor=north] {$\x$};
  \foreach \y in {0,1,2,3,4}
  \draw (1pt,\y cm) -- (-1pt,\y cm) node[anchor=east] {$\y$};

  \draw[thick, ->] (0,0) -- (7.5,0) node[anchor=north west] {$i$};
  \draw[thick, ->] (0,0) -- (0,4.5) node[anchor=south east] {$h_i$};

  \draw[thick, red, -] (0,1) -- (1,4) node[above left, pos=0.5] {\textcolor{red}{3}};
  \draw[thick, blue, -] (1,4) -- (2,2) node[above right, pos=0.5] {\textcolor{blue}{2}};
  \draw[thick, red, -] (2,2) -- (3,4) node[above left, pos=0.5] {\textcolor{red}{2}};
  \draw[thick, blue, -] (3,4) -- (4,3) node[above right, pos=0.5]  {\textcolor{blue}{1}};
  \draw[thick, blue, -] (4,3) -- (5,1) node[above right, pos=0.5] {\textcolor{blue}{2}};
  \draw[thick, red, -] (5,1) -- (6,2) node[above left, pos=0.5] {\textcolor{red}{1}};
  \draw[thick, blue, -] (6,2) -- (7,0) node[above right, pos=0.5] {\textcolor{blue}{2}};

  \fill (0,1)  circle (2pt);
  \fill (1,4)  circle (2pt);
  \fill (2,2)  circle (2pt);
  \fill (3,4)  circle (2pt);
  \fill (4,3)  circle (2pt);
  \fill (5,1)  circle (2pt);
  \fill (6,2)  circle (2pt);
  \fill (7,0)  circle (2pt);
  \fill[white] (5,-0.9)  circle (2pt);
  \node at (3.5,-1){$\mathscr{L}_1$};

  \begin{scope}[shift={(9,0)}]
    \draw[thick, ->] (0,0) -- (7.5,0);
    \draw[thick, ->] (0,0) -- (0,4.5);
    \foreach \x in {0,1,2,3,4,5,6,7}
    \draw (\x cm,1pt) -- (\x cm,-1pt) node[anchor=north] {$\x$};
    \foreach \y in {0,1,2,3,4}
    \draw (1pt,\y cm) -- (-1pt,\y cm) node[anchor=east] {$\y$};

    \draw[thick, ->] (0,0) -- (7.5,0) node[anchor=north west] {$i$};
    \draw[thick, ->] (0,0) -- (0,4.5) node[anchor=south east] {$h_i$};

    \draw[thick, red, -] (0,1) -- (1,3) node[above left, pos=0.58] {\textcolor{red}{2}};
    \draw[thick, blue, -] (1,3) -- (2,2) node[above right, pos=0.5] {\textcolor{blue}{1}};
    \draw[thick, blue, -] (2,2) -- (3,0) node[above right, pos=0.5]  {\textcolor{blue}{2}};
    \draw[thick, red, -] (3,0) -- (4,1) node[above left, pos=0.5]  {\textcolor{red}{1}};
    \draw[thick, blue, -] (4,1) -- (5,-1) node[above right, pos=0.5]  {\textcolor{blue}{2}};
    \draw[thick, red, -] (5,-1) -- (6,2) node[above left, pos=0.5] {\textcolor{red}{3}};
    \draw[thick, blue, -] (6,2) -- (7,0) node[above right, pos=0.5]  {\textcolor{blue}{2}};
    \fill (0,1)  circle (2pt);
    \fill (1,3)  circle (2pt);
    \fill (2,2)  circle (2pt);
    \fill (3,0)  circle (2pt);
    \fill (4,1)  circle (2pt);
    \fill (5,-1)  circle (2pt);
    \fill (6,2)  circle (2pt);
    \fill (7,0)  circle (2pt);
  \node at (3.5,-1){$\mathscr{L}_2$};
  \end{scope}
\end{tikzpicture}

\caption{A good lattice walk $\lattice_1$ and a bad lattice walk $\lattice_2$.}
\label{fig:lattice_walk2}

\end{figure}

Let $\phi$ denote cyclic rotation of a lattice walk, so that if $\lattice = (l_1, \dots, l_k)$, then $\phi(\lattice) = (l_2, \dots, l_k, l_1)$. Denote the orbit of $\phi$ acting on $\lattice$ by $\Phi(\lattice) = \{\phi^m(\lattice) \mid 1 \leq m \leq k\}$. Note that since $\lattice$ sums to $-1$, it cannot be periodic, so $|\Phi(\lattice)|=k$. We then have the following version of the cycle lemma.

\begin{lemma}\label{lem:cycle_lem}
    For any lattice walk $\lattice$, the orbit $\Phi(\lattice)$ contains a unique good lattice walk. 
\end{lemma}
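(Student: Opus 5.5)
Let $\lattice = (l_1, \dots, l_k)$ with partial sums $S_i = l_1 + \cdots + l_i$ for $0 \le i \le k$, so $S_0 = 0$, $S_k = -1$ and $h_i = 1 + S_i$. The plan is to identify the good rotation by where the partial sums attain their minimum. A rotation $\phi^m(\lattice)$ with $0 \le m \le k-1$ (indices mod $k$) has heights
\[
h^{(m)}_j = 1 + S_{m+j} - S_m \quad\text{for } m+j \le k,
\qquad
h^{(m)}_j = S_{m+j-k} - S_m \quad\text{for } m+j > k,
\]
where the second formula uses $S_k = -1$. Equivalently, extend $S$ to all integers by $S_{i+k} = S_i - 1$; then $h^{(m)}_j = 1 + S_{m+j} - S_m$ for every $j$.

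First I record what goodness means in these terms. Since the steps are integers, $h^{(m)}_j > 0$ is the same as $S_{m+j} \ge S_m$. Hence $\phi^m(\lattice)$ is good if and only if $S_{m+j} \ge S_m$ for all $0 \le j \le k-1$, the endpoint condition $h^{(m)}_k = 0$ being automatic.

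Next comes existence. I choose $m$ to be the \emph{largest} index in $\{0, \dots, k-1\}$ at which $\min_{0 \le i \le k-1} S_i$ is attained. For $m + j \le k-1$, we get $S_{m+j} \ge S_m$ directly from minimality. For $m + j \ge k$, we have $S_{m+j} = S_{m+j-k} - 1$ with $m + j - k < m$. Here I need the strict inequality $S_{m+j-k} > S_m$, which holds because $m$ is the last position attaining the minimum; together with integrality this gives $S_{m+j} \ge S_m$. If $m+j = k$ exactly, the term is $S_k = -1 = S_0 - 1$, and $S_0 > S_m$ or $m = 0$; I will have to treat this boundary case carefully.

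For uniqueness, suppose $\phi^m(\lattice)$ and $\phi^{m'}(\lattice)$ are both good with $m < m'$. Applying the goodness of $\phi^m$ to $j = m' - m$ gives $S_{m'} \ge S_m$. Applying the goodness of $\phi^{m'}$ to $j = k - (m' - m)$ gives $S_{m+k} \ge S_{m'}$, i.e.\ $S_m - 1 \ge S_{m'}$. These two inequalities contradict each other.

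The main obstacle is the bookkeeping: keeping the extended partial sums and the off-by-one between the conditions $h > 0$ and $S \ge S_m$ consistent. The cleanest route is to work with the extension $S_{i+k} = S_i - 1$ throughout, so that the argument reduces to the standard Dvoretzky--Motzkin "last minimum" argument. The fact that the orbit has exactly $k$ distinct elements, noted before the statement, guarantees that "unique index $m$" is the same as "unique walk in $\Phi(\lattice)$".
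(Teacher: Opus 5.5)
Your reformulation is correct: with the extension $S_{i+k}=S_i-1$, the rotation $\phi^m(\lattice)$ is good iff $S_{m+j}\ge S_m$ for $0\le j\le k-1$. Your pairwise uniqueness argument is also valid. The gap is in existence, because you pick the wrong extremal index. If $m$ is the \emph{largest} index in $\{0,\dots,k-1\}$ attaining $\min S_i$, you only get $S_i>S_m$ for $i>m$. But in the case $m+j\ge k$ you must compare with $i=m+j-k<m$. There you only know $S_i\ge S_m$, and equality gives $h^{(m)}_j=1+(S_i-1)-S_m=0$.

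Concretely, $\lattice=(-1,1,-1,1,-1)$ has partial sums $0,-1,0,-1,0,-1$, and the minimum on $\{0,\dots,4\}$ is attained last at $m=3$. Then $\phi^3(\lattice)=(1,-1,-1,1,-1)$ has heights $1,2,1,0,1,0$, so it is bad. The boundary case you deferred also fails. For $\lattice=(1,-1,-1)$ your rule gives $m=2$, and $\phi^2(\lattice)=(-1,1,-1)$ has $h_1=0$.

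The repair is to take $m$ to be the \emph{smallest} index in $\{0,\dots,k-1\}$ attaining the minimum. For $m+j\le k-1$, minimality gives $S_{m+j}\ge S_m$. For $k\le m+j\le m+k-1$, the index $i=m+j-k$ lies in $\{0,\dots,m-1\}$, strictly before the first minimum. Hence $S_i\ge S_m+1$ and $S_{m+j}=S_i-1\ge S_m$. The case $m+j=k$ needs no separate treatment, since it forces $m\ge 1$ and $i=0$.

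In fact your own criterion shows that $\phi^m(\lattice)$ is good \emph{if and only if} $m$ is the first minimum index. This is exactly how the paper argues: it shows that $\phi^m(\lattice)$, $1\le m\le k$, is good precisely when $m$ is the first index of the lowest height among $h_0,\dots,h_k$. That gives existence and uniqueness at once, whereas you prove uniqueness by a separate (correct) contradiction.

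The ``last minimum'' rule you recalled belongs to the mirror-image convention, with total sum $+1$ and strictly positive partial sums. With total $-1$ and the non-strict condition $S_{m+j}\ge S_m$, the correct choice is the first minimum.
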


\begin{proof}
    Let $(h_0, \dots, h_k)$ be the heights of $\lattice$. For fixed $m$ with $1 \leq m \leq k$, let $\phi^m(\lattice)$ have heights $(h'_0, \dots, h'_k)$. Then 
    \[\label{eq:good_walk}
    h'_j = \begin{cases}
        h_{j+m}-h_m+1 & \text{for }  j \leq k-m,\\
        {h_{j-k+m} - h_m} & \text{for } j \geq k-m.\\
        \end{cases}
    \]

    We claim that there is exactly one $m$ for which $h'_j > 0 $ for all $0 \leq j < k$. This condition is equivalent to $h_m \leq h_{j+m}$ for $0 \leq j \leq k-m$ and $h_m < h_{j-k+m}$ for $k-m \leq j < k$, or after reindexing, $h_m \leq h_{m'}$ for $m \leq m' \leq k$ and $h_m < h_{m'}$ for $0 \leq m' < m$. The unique value of $m$ for which these are satisfied is the first index of the lowest height appearing in $\lattice$.
\end{proof}

\begin{example}
    Consider again the lattice walks in Example~\ref{ex:latticewalks} as shown in Figure~\ref{fig:lattice_walk2}. The lowest height of the bad lattice walk $\lattice_2$ is $h_5 = -1$. Hence $\phi^5(\lattice_2) = \lattice_1$ is the unique good lattice walk in its orbit. 
\end{example}

From Lemma~\ref{lem:cycle_lem}, we can count the number of good lattice walks by type. For an integer partition $\lambda = (\lambda_1, \dots, \lambda_k)$ with $\lambda_1 \geq \cdots \geq \lambda_k > 0$, let
\begin{itemize}
    \item $|\lambda| = \lambda_1 + \cdots + \lambda_k$, the sum of the parts of $\lambda$;
    \item $\ell(\lambda) = k$, the number of parts of $\lambda$;
    \item $\Pi(\lambda) = \prod_{i=1}^k \lambda_i$, the product of the parts of $\lambda$; and
    \item $\aut(\lambda) = \prod_i m_i!$, where $m_i$ is the number of parts of $\lambda$ of size $i$.
\end{itemize}
\begin{corollary} \label{cor:goodwalks}
    Let $\lambda$ and $\mu$ be partitions with $|\mu| = |\lambda|+1$. Then the number of good lattice walks of type $(\lambda, \mu)$ is \[\frac{(\ell(\lambda) + \ell(\mu)-1)!}{\aut(\lambda) \aut(\mu)}.\]
\end{corollary}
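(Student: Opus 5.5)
We count all lattice walks of type $(\lambda,\mu)$ and then apply Lemma~\ref{lem:cycle_lem} to pass to the good ones.

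First, the total number of lattice walks of type $(\lambda,\mu)$. A walk of this type is a sequence of $k = \ell(\lambda)+\ell(\mu)$ nonzero integers. Its positive entries form a rearrangement of the parts of $\lambda$, and its negative entries form a rearrangement of the negatives of the parts of $\mu$. The sum is automatically $|\lambda|-|\mu| = -1$, so every such sequence is a lattice walk. Such a sequence is determined by three choices:
\begin{itemize}
\item which $\ell(\lambda)$ of the $k$ positions hold up steps;
\item a distinct arrangement of the multiset of parts of $\lambda$ in those positions;
\item a distinct arrangement of the multiset of parts of $\mu$ in the remaining positions.
\end{itemize}
This gives
\[
\binom{k}{\ell(\lambda)}\cdot\frac{\ell(\lambda)!}{\aut(\lambda)}\cdot\frac{\ell(\mu)!}{\aut(\mu)} = \frac{k!}{\aut(\lambda)\aut(\mu)}
\]
walks of type $(\lambda,\mu)$. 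Equivalently, the $k!$ orderings of the labeled steps overcount each walk by exactly $\aut(\lambda)\aut(\mu)$, since that is the number of ways to permute equal parts among themselves.

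Second, we partition these walks into orbits of $\phi$. Cyclic rotation preserves the type, so $\phi$ acts on the set of walks of type $(\lambda,\mu)$. As noted before Lemma~\ref{lem:cycle_lem}, a walk summing to $-1$ cannot be periodic, so every orbit has exactly $k$ elements. By Lemma~\ref{lem:cycle_lem}, each orbit contains exactly one good walk. The number of good walks of type $(\lambda,\mu)$ is therefore the number of orbits:
\[
\frac{1}{k}\cdot\frac{k!}{\aut(\lambda)\aut(\mu)}=\frac{(\ell(\lambda)+\ell(\mu)-1)!}{\aut(\lambda)\aut(\mu)}.
\]

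The only delicate point is that orbits have size exactly $k$. Suppose $\phi^d(\lattice)=\lattice$ with $d$ a proper divisor of $k$. Then $\lattice$ is a concatenation of $k/d$ identical blocks, so its sum $-1$ would be divisible by $k/d>1$, which is impossible. Everything else is a direct count.
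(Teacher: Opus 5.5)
Your proposal is correct and matches the paper's proof: count all walks of type $(\lambda,\mu)$ by the multinomial coefficient $\frac{(\ell(\lambda)+\ell(\mu))!}{\aut(\lambda)\aut(\mu)}$, then divide by the orbit size $\ell(\lambda)+\ell(\mu)$, since by Lemma~\ref{lem:cycle_lem} each orbit contains exactly one good walk. Your divisibility argument for why orbits have full size just spells out the remark the paper makes before Lemma~\ref{lem:cycle_lem}.
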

\begin{proof}
The total number of lattice walks of type $(\lambda, \mu)$ is the multinomial coefficient $\frac{(\ell(\lambda)+\ell(\mu))!}{\aut(\lambda)\aut(\mu)}$. By Lemma~\ref{lem:cycle_lem}, there is exactly one good lattice walk in each cyclic orbit of size $\ell(\lambda)+\ell(\mu)$, so dividing by this gives the result.
\end{proof}

\subsection{Factorizations supported on stars}

We now relate good lattice walks to certain types of factorizations supported on stars. For a permutation $\sigma$, let $\type(\sigma)$ be the integer partition denoting its cycle type, and for a set partition $\pi$, let $\type(\pi)$ be the integer partition denoting the sizes of its blocks.

\begin{theorem} \label{thm:lattice_walk_count}
Let $H$ be a star graph on vertex set $V \subseteq [n]$ with center vertex $z \in V$, and let $\lambda$ and $\mu$ be partitions with $|\mu| = |\lambda| + 1 = |V| = m$. Fix a partitioned permutation $(\sigma, \pi) \in \allpp_{V \setminus \{z\}}$ with $\type(\sigma) = \type(\pi) = \lambda$. Then
\begin{enumerate}[label = (\alph*)]
    \item the minimum length of a factorization $f=(t_1, \dots, t_k)$ supported on $H$ such that $(\sigma', \pi') = (\sigma,\pi) \cdot t_1 \cdots t_k \in \allpp_V$ satisfies $\type(\sigma') = \mu$ and $\pi' = \hat{1}_V$ is $\ell(\lambda) + \ell(\mu) - 1$, and
    \item the number of such minimal factorizations $f$ is \[\frac{(\ell(\lambda) + \ell(\mu)-1)!}{\aut(\mu)} \cdot \Pi(\lambda).\] 
\end{enumerate}
\end{theorem}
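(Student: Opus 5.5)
Part (a) follows from the length function in Lemma~\ref{Lem:minlen_pp}. For part (b), I will encode each minimal factorization by a lattice walk and apply Corollary~\ref{cor:goodwalks}.

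\textbf{Part (a).} Regard $(\sigma,\pi)$ as an element of $\allpp_V$, with $z$ added as a fixed point in its own block. Then $c(\sigma)=\ell(\lambda)+1$ and $\ell(\pi)=\ell(\lambda)+1$, so the quantity $l=|V|+c(\sigma)-2\ell(\pi)$ from the proof of Lemma~\ref{Lem:minlen_pp} equals $m-\ell(\lambda)-1$. At the target $(\sigma',\hat 1_V)$ we have $l=m+\ell(\mu)-2$. Each transposition raises $l$ by at most one, so $k\ge \ell(\lambda)+\ell(\mu)-1$, with equality exactly when every step is of type (a) or (c) of Lemma~\ref{Lem:multiply_transp}. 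For existence, I first merge every block of $\pi$ into the cycle of $z$, using one star transposition $s_{za}$ per block ($\ell(\lambda)$ steps of type (c)). This yields a single $m$-cycle through $z$. I then split off cycles of sizes $\mu$ one at a time with $\ell(\mu)-1$ type-(a) steps of the form $s_{za}$. This is possible because multiplying a cycle containing $z$ by $s_{za}$ splits it into two cycles, one still containing $z$, with any prescribed size of the piece not containing $z$.

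\textbf{Part (b).} In a minimal factorization every transposition has the form $s_{za}$. Throughout the product, the cycle of the current permutation that contains $z$ is the only block that is not one of the original blocks of $\pi$ or a cycle already split off. I therefore track the size $h$ of $z$'s cycle, starting at $h_0=1$. A type-(c) step merges in a block of $\pi$ (a cycle of $\sigma$) of size $\lambda_i$, so $h$ rises by $\lambda_i$. A type-(a) step splits off a cycle of size $\mu_j$, so $h$ falls by $\mu_j$. After all $k$ steps, $z$'s own cycle, of size $h_k$, is one of the parts of $\mu$, say the last one. So the factorization determines a walk whose $h_k=0$ if I declare that the final cycle containing $z$ is recorded as one more down step. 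Concretely, I will record the sequence of $k$ steps followed by a final down step of size equal to the last cycle through $z$. This gives a walk of type $(\lambda,\mu)$ with $\ell(\lambda)+\ell(\mu)$ steps, summing to $-1$ because $|\mu|=|\lambda|+1$. The walk is good because $h\ge1$ at every earlier stage, as $z$ always lies in its cycle.

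Conversely, I need to count factorizations over a given good walk. At each up step of size $\lambda_i$, I choose which remaining block of that size to absorb and which element $a$ of it to use; the choice of $a$ gives $\lambda_i$ options. Over all up steps this contributes $\Pi(\lambda)$ times the number of ways to order equal-size blocks, which is $\aut(\lambda)$. At a down step of size $\mu_j$, I must split $z$'s current cycle of length $h$ via $s_{za}$ so that the piece not containing $z$ has size $\mu_j$. This forces $a=\rho^{h-\mu_j}(z)$, where $\rho$ is the current permutation, so the choice is unique and possible precisely because $h-\mu_j\ge 1$, which is where goodness enters. The number of factorizations is then
\[\#\{\text{good walks of type }(\lambda,\mu)\}\cdot \aut(\lambda)\,\Pi(\lambda) = \frac{(\ell(\lambda)+\ell(\mu)-1)!}{\aut(\mu)}\,\Pi(\lambda)\]
by Corollary~\ref{cor:goodwalks}.

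\textbf{Main obstacle.} The delicate step is verifying that this map is a genuine bijection. The resulting cycle type must be exactly $\mu$ with no double counting. This uses the fact that the walk labels the down steps by sizes, and that $\aut(\mu)$ is already accounted for by counting walks as sequences of sizes rather than as labeled cycles. The appended final step must also always be consistent with the data. I would verify the split formula $(\cdots)s_{za}$ explicitly, check that a good walk with its last step appended corresponds exactly to the constraint that all intermediate heights are at least $1$, and then check the count against the Irving--Rattan formula in the case $\lambda=1^{m-1}$, $\mu=1^m$.
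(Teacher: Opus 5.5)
Your proposal is correct and follows essentially the same route as the paper. The bound from Lemma~\ref{Lem:minlen_pp} forces every step to be case (a) or (c); the sizes of the cycle through $z$, with the final cycle appended as a last down step, form a good walk of type $(\lambda,\mu)$; and each good walk lifts to $\aut(\lambda)\,\Pi(\lambda)$ factorizations, so the count follows from Corollary~\ref{cor:goodwalks}.

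Your explicit construction for existence in (a) is a harmless addition that the paper leaves implicit. There is one small slip: under the paper's right-to-left composition, $(\cdots)s_{za}$ splits off a piece of size $p$ when $a=\rho^{p}(z)$, so the forced element is $\rho^{\mu_j}(z)$ rather than $\rho^{h-\mu_j}(z)$; this does not affect uniqueness or the existence condition $h-\mu_j\ge 1$.
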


\begin{proof}
    Suppose that we have such an $f = (t_1, \dots, t_k)$. We associate to $f$ a good lattice walk $\lattice$ with heights $(h_0, \dots, h_k, h_{k+1})$ such that, for $0 \leq i \leq k$, $h_i$ is the length of the cycle containing $z$ in $\partperm{\sigma}{\pi} \cdot t_1 \dots t_i$. Note that $h_0=1$ since the embedding of $\partperm{\sigma}{\pi}$ into $\allpp_V$ has $z$ in a $1$-cycle.
    
    By Lemma~\ref{Lem:minlen_pp}, 
    \begin{align*}
        \len\partperm{\sigma}{\pi} &= |V \setminus \{z\}| + \cycle(\sigma) - 2\parts(\pi) = m-1 - \ell(\lambda),\\
        \len\partperm{\sigma'}{\pi'} &= |V| + \cycle(\sigma') - 2\parts(\pi') = m+\ell(\mu) - 2.
    \end{align*} Therefore, the length of $f$ must be at least
    \[
        \len\partperm{\sigma'}{\pi'} - \len\partperm{\sigma}{\pi} = \ell(\lambda)+\ell(\mu)-1.
    \]
    To achieve this length, multiplying by each transposition $s_{iz}$ in $f$ needs to be as in case (a) or (c) of Lemma~\ref{Lem:multiply_transp}, i.e., either (a) $i$ and $z$ are already in the same cycle, or (c) they are in different cycles and different parts.

    In case (a), we split the cycle containing $i$ and $z$: $(\dots  i \dots  z )s_{iz} = (\dots i )(\dots z)$. By Lemma~\ref{Lem:multiply_transp}, we cannot further modify the new cycle containing $i$ again (as this would result in case (b)), so it must end up as a cycle in $\sigma'$. If this cycle has length $l$, then the length of the cycle containing $z$ decreases by $l$, so we have a down step of size $-l$ in $\lattice$. Note that as long as the cycle containing $z$ has size bigger than $l$, there is a unique choice of transposition $s_{iz}$ that will result in this down step.

    In case (c), we merge the cycle containing $z$ with one of the original cycles of $\sigma$: $(\dots i )(\dots z)s_{iz} = (\dots  i \dots  z )$. This increases the length of the cycle containing $z$ by the length $l$ of the merged cycle, so it results in an up step of size $l$ in $\lattice$. For a given cycle of $\sigma$ of length $l$, there are $l$ choices for the transposition $s_{iz}$ that will result in this up step. Since $\pi' = \hat{1}_{V}$, each cycle of $\sigma$ must be merged exactly once.

    It follows that each up step of $\lattice$ corresponds to a cycle of $\sigma$ together with a choice of element in that cycle, while each down step of $\lattice$ corresponds to a cycle of $\sigma'$ of the same length, which is uniquely determined. (The final down step corresponds to the cycle of $\sigma'$ containing $z$.) Therefore, we can choose $f$ by:
    \begin{itemize}
        \item choosing a good lattice walk $\lattice$ of type $(\lambda, \mu)$;
        \item assigning each cycle of $\sigma$ to an up step of $\lattice$ of the same size; and
        \item choosing an element in each cycle of $\sigma$.
    \end{itemize}

    Using Corollary~\ref{cor:goodwalks} for the first choice, it follows that the total number of such factorizations is
    \[ \frac{(\ell(\lambda) + \ell(\mu)-1)!}{\aut(\lambda)\aut(\mu)} \cdot \aut(\lambda) \cdot \Pi(\lambda) = \frac{(\ell(\lambda) + \ell(\mu)-1)!}{\aut(\mu)} \cdot \Pi(\lambda),\]
    as desired.
    \end{proof}

\begin{example}\label{ex:construct_lattice}
    Let $(\sigma, \pi) = (1) \mid (2\ 3) \mid (4\ 5\ 6) \in \allpp_{6}$, so that $\lambda = \type(\sigma) = \type(\pi) = 321$, and let $\mu = 2221$. Then consider multiplying $(\sigma,\pi)$ (after embedding into $\allpp_7$) by the $\ell(\lambda) + \ell(\mu)-1 = 6$ transpositions $s_{67}s_{57}s_{27}s_{37}s_{67}s_{17}$:
    \begin{align*}
        (\sigma, \pi) = (1) \mid (2\ 3) \mid (4\ 5\ 6) \mid (7)&\xrightarrow{\cdot s_{67}} \\
        (1) \mid (2\ 3) \mid (4\ 5\ 6\ 7) &\xrightarrow{\cdot s_{57}}\\
        (1) \mid (2\ 3) \mid (4\ 5)(6\ 7) &\xrightarrow{\cdot s_{27}}\\
        (1) \mid (4\ 5)(3\ 2\ 6\ 7) &\xrightarrow{\cdot s_{37}}\\
        (1) \mid (4\ 5)(3)(2\ 6\ 7) &\xrightarrow{\cdot s_{67}}\\
        (1) \mid (4\ 5)(3)(2\ 6)(7) &\xrightarrow{\cdot s_{17}}\\
        (4\ 5)(3)(2\ 6)(1\ 7) &= (\sigma',\pi').
    \end{align*}

    The resulting partitioned permutation $(\sigma',\pi')$ has one part and $\type(\sigma') = \mu = 2221$. Recording the lengths of the cycle containing $7$ (and appending $0$ to the end) gives $(1, 4, 2, 4, 3, 1, 2, 0)$, which are the heights of the good lattice walk $\lattice_1$ of type $(\lambda, \mu)$ shown in Figure~\ref{fig:lattice_walk2}. (This sequence of transpositions is one of $\aut(\lambda) \cdot \Pi(\lambda) = 6$ that will result in this lattice walk since we had a choice of how to merge each of the cycles of $\sigma$ into the cycle containing $7$.)
\end{example}

\section{Factorization trees}\label{sec:factorization_tree}

In this section, we will define a combinatorial object called a \emph{factorization tree} and show how to associate such a tree to any minimal transitive factorization. We will then use factorization trees to give a combinatorial interpretation for the number of minimal transitive factorizations supported on a quasi-threshold graph.

\subsection{Factorization trees}

For a graph $T = ([n], E)$ with edge weights $w\colon E \to \Z_{>0}$, let $\ind(i)$ and $\outd(i)$ denote the sums of the weights along incoming edges and outgoing edges at vertex $i$, respectively, where we orient each edge from the smaller vertex to the larger vertex.

\begin{definition}\label{def:ftree}
    A \emph{factorization tree} (or \emph{forest}) is a weighted tree (or forest) $T =([n],E)$ with edge weights $w\colon E \to \Z_{>0}$ such that $1+\ind(i) \geq \outd(i)$ for all vertices $i$.
\end{definition}

We will write $s(i) = 1+\ind(i) - \outd(i) \in \Z_{\geq 0}$ for the discrepancy at vertex $i$. (We can view the inequality $s(i) \geq 0$ as a sort of \emph{flow constraint}.)

\begin{figure}
    \centering
    \begin{tikzpicture}[
        yscale=-1,
        node/.style={circle, draw=black, thick, minimum size=4mm, inner sep=2.5pt},
        e/.style={fill=white, circle, inner sep=1.5pt, text=red, node font=\footnotesize}
    ]
    %Nodes
    \node[node] at (-1,-0.5) (1) {8};
    \node[node] at (1,-0.5) (2) {7};
    \node[node] at (0,.5) (3) {6};
    \node[node] at (0,2) (4) {5};
    \node[node] at (1,2.8) (5) {4};
    \node[node] at (0,3.2) (6) {3};
    \node[node] at (1,4) (7) {2};
    \node[node] at (-1,2.8) (8) {1};

    \draw[-] (1) to node[e] {2} (3);
    \draw[-] (2) to node[e] {2} (3);
    \draw[-] (3) to node[e] {3} (4);
    \draw[-] (4) to node[e] {2} (5);
    \draw[-] (4) to node[e] {1} (8);
    \draw[-] (5) to node[e] {1} (7);
    \draw[-] (4) to node[e] {1} (6);
    \end{tikzpicture}  
    \caption{An example of a factorization tree.}
    \label{fig:K4_ex0}
\end{figure}

\begin{example}
    An example of a factorization tree is shown in Figure~\ref{fig:K4_ex0}. For example, at vertex $5$, 
    \[1+\ind(5) = 1 + w(4,5) + w(3,5) + w(1,5) = 5 \geq 3 = w(5,6) = \outd(5).\]
    Therefore $s(5) = 5-3=2$. One can likewise check that $s(8) = s(7) = 3$ and $s(i)=0$ for all other vertices $i$.
\end{example}

Factorization trees are also called \emph{genus 0 labeled floor diagrams} in work of Fomin and Mikhalkin \cite{floor_diagrams}. Though it is not important for our purposes, it is shown there that the number of factorization trees on $n$ vertices is $n^{n-2}$. For completeness, we also provide a proof of this (that is very similar to the one in \cite{floor_diagrams}) in Proposition~\ref{prop:cayley}. The $4^2 = 16$ factorization trees on $4$ vertices are shown in Figure~\ref{fig:k4_nid}.

\medskip

The following algorithm assigns a factorization forest to any minimal factorization. During the intermediate stages of the algorithm, each vertex $i$ will carry a label $\sigma^{(i)}$, which will be a permutation (in cycle notation) of a subset of $[n]$. By construction, the domains of the $\sigma^{(i)}$ will be disjoint and partition $[n]$.

\begin{algorithm}\label{alg:labelled} Given a minimal factorization $f$, construct a factorization forest $T$ as follows.

    \begin{enumerate}
    \item Apply Hurwitz moves to $f$ to obtain its standard form $\Tilde{f}$ as in Lemma~\ref{lem:std_form}.
    \item Initialize $T$ on $n$ isolated vertices $\{1,2,\dots,n\},$ and for each vertex $i$, set $\sigma^{(i)} = (i)$. 
    \item For each transposition $s_{ij} \in \tilde{f}$ with $i < j$: 
        \begin{enumerate}
            \item[(a)] Find the unique vertex $k \leq j$ such that $i$ lies in a cycle $\alpha$ of $\sigma^{(k)}$. If $k \neq j$, then:
            \begin{enumerate}
                \item[i.] add the edge $\{j,k\}$ to $T$ with weight $w(k,j) = |\alpha|$, the length of the cycle $\alpha$; %, and label this edge by $\alpha$;
                \item[ii.] move the cycle $\alpha$ from $\sigma^{(k)}$ to $\sigma^{(j)}$.
            \end{enumerate}
            \item[(b)] Multiply $\sigma^{(j)}$ by $s_{ij}$.
        \end{enumerate}
    \end{enumerate}

    The resulting edge-weighted, vertex-labeled graph is the \emph{labeled factorization forest of $f$}. The \emph{(unlabeled) factorization forest of $f$} is obtained by removing the vertex labels.
\end{algorithm}

Note that the process of moving cycles between vertex labels in step (3a) only occurs along edges (and only from smaller vertices to larger vertices). Hence if $i$ lies in a cycle of $\sigma^{(k)}$, then vertices $i$ and $k$ must be in the same connected component.

\begin{example} \label{ex:algstep3}
    Let $\tilde{f} = (s_{45},s_{35},s_{45},s_{36},s_{47},s_{37},s_{47},s_{38},s_{19},s_{29},s_{49},s_{69})$. In Figure~\ref{fig:alg_ex1}, $T_1$ depicts the state of the factorization forest after processing the first $10$ transpositions in $\tilde{f}$ during step (3) of Algorithm~\ref{alg:labelled}.

    When considering $s_{49}$, since $i=4$ lies in the cycle $\alpha = (6\; 4)$ of $\sigma^{(7)}$, we have $k=7$. Therefore we add an edge of weight $2$ between $7$ and $9$, and we move $\alpha$ from $\sigma^{(7)}$ to $\sigma^{(9)}$. We then multiply the new $\sigma^{(9)}$ by $s_{49}$, replacing it with $(6\; 4)(2\; 1\;9) s_{49} = (6\; 4\; 2\; 1\;9)$, resulting in $T_2$.

    When considering $s_{69}$, since $6$ already lies in a cycle of $\sigma^{(9)}$, we simply perform step (3b), replacing $\sigma^{(9)}$ with $(6\; 4\; 2\; 1\;9) s_{69} = (6)(4\; 2\; 1\;9)$, resulting in $T_3$.
\end{example}

\begin{figure}
  \centering
\begin{tikzpicture}[
    yscale=-1,
    node/.style={circle, draw=black, thick, minimum size=4mm, inner sep=2.5pt},
    e/.style={fill=white, circle, inner sep=1.5pt, text=red, node font=\footnotesize}
  ]
  \begin{scope}
    \node[node, label=west:\textcolor{blue}{$(6\;4)(7)$}] at (-0.3,-0.6) (5) {7};
    \node[node, label=north:\textcolor{blue}{$(3 \;8)$}] at (-1,-1.8) (12) {8};
    \node[node] at (-0.3,.8) (6) {6};
    \node[node, label=west:\textcolor{blue}{$(5)$}] at (-1,2) (7) {5};
    \node[node] at (-0.3,3.2) (9) {3};
    \node[node] at (-1.7,3.2) (8) {4};
    \node[node, label=north:\textcolor{blue}{$(2 \;1\; 9)$}] at (1, -1.8) (2) {9};
    \node[node] at (1,-.4) (10) {2};
    \node[node] at (2.3,-.6) (11) {1};

    \draw[-] (12) to node[e] {1} (5);
    \draw[-] (5) to node[e] {3} (6);
    \draw[-] (6) to node[e] {2} (7);
    \draw[-] (7) to node[e] {1} (8);
    \draw[-] (7) to node[e] {1} (9);
    \draw[-] (2) to node[e] {1} (10);
    \draw[-] (2) to node[e] {1}(11);

    \node at (.2,4){$T_1$};
  \end{scope}
  \begin{scope}[shift={(5.5,0)}]
    \node[node, label=west:\textcolor{blue}{$(7)$}] at (-0.3,-0.6) (5) {7};
    \node[node, label=north:\textcolor{blue}{$(3 \;8)$}] at (-1,-1.8) (12) {8};
    \node[node] at (-0.3,.8) (6) {6};
    \node[node, label=west:\textcolor{blue}{$(5)$}] at (-1,2) (7) {5};
    \node[node] at (-0.3,3.2) (9) {3};
    \node[node] at (-1.7,3.2) (8) {4};
    \node[node, label=north:\textcolor{blue}{$(6\; 4\;2 \;1\;9)$}] at (1, -1.8) (2) {9};
    \node[node] at (1,-.4) (10) {2};
    \node[node] at (2.3,-.6) (11) {1};

    \draw[-] (12) to node[e] {1} (5);
    \draw[-] (5) to node[e] {3} (6);
    \draw[-] (6) to node[e] {2} (7);
    \draw[-] (7) to node[e] {1} (8);
    \draw[-] (7) to node[e] {1} (9);
    \draw[-] (2) to node[e] {1} (10);
    \draw[-] (2) to node[e] {1}(11);
    \draw[-] (5) to node[e] {2}(2);

    \node at (.2,4){$T_2$};
  \end{scope}
  \begin{scope} [shift={(11,0)}]
    \node[node, label=west:\textcolor{blue}{$(7)$}] at (-0.3,-0.6) (5) {7};
    \node[node, label=north:\textcolor{blue}{$(3 \;8)$}] at (-1,-1.8) (12) {8};
    \node[node] at (-0.3,.8) (6) {6};
    \node[node, label=west:\textcolor{blue}{$(5)$}] at (-1,2) (7) {5};
    \node[node] at (-0.3,3.2) (9) {3};
    \node[node] at (-1.7,3.2) (8) {4};
    \node[node, label=north:\textcolor{blue}{$(6)(4\;2 \;1\;9)$}] at (1, -1.8) (2) {9};
    \node[node] at (1,-.4) (10) {2};
    \node[node] at (2.3,-.6) (11) {1};

    \draw[-] (12) to node[e] {1} (5);
    \draw[-] (5) to node[e] {3} (6);
    \draw[-] (6) to node[e] {2} (7);
    \draw[-] (7) to node[e] {1} (8);
    \draw[-] (7) to node[e] {1} (9);
    \draw[-] (2) to node[e] {1} (10);
    \draw[-] (2) to node[e] {1}(11);
    \draw[-] (5) to node[e] {2}(2);

    \node at (.2,4){$T_3$};
  \end{scope}
\end{tikzpicture}
\caption{Example of the two possible cases in step (3) of Algorithm~\ref{alg:labelled}. See Example~\ref{ex:algstep3}.}
    \label{fig:alg_ex1}
\end{figure}

The following proposition outlines some basic properties of Algorithm~\ref{alg:labelled}, in particular verifying that it does output a factorization forest.
\begin{proposition}
    Let $f$ be a minimal factorization of $(\sigma, \pi) \in \allpp_n$, and let $T$ be the labeled graph output by Algorithm~\ref{alg:labelled}.
    \begin{enumerate}[label=(\alph*)]
        \item The connected components of $T$ are given by the parts of $\pi$, and $\sigma = \sigma^{(1)} \cdots \sigma^{(n)}$.
        \item The size of the domain of $\sigma^{(i)}$ is $s(i)$. 
        \item The graph $T$ (ignoring the vertex labels) is a factorization forest. 
    \end{enumerate}
\end{proposition}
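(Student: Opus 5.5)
The plan is to run an induction over the transpositions of the standard form $\tilde f$, processed in order, and carry a small family of invariants. Write $T_r$ and $\sigma^{(i)}_r$ for the forest and labels after the first $r$ transpositions, and $(\sigma_r,\pi_r)=\tilde t_1\cdots\tilde t_r$. Since $f$ is minimal, Lemma~\ref{lem:std_form} and the fact that Hurwitz moves preserve the product in $\allpp_n$ show that $\tilde f$ is also a minimal factorization of $(\sigma,\pi)$. By Lemma~\ref{Lem:minlen_pp}, each step is then of type (a) or (c) of Lemma~\ref{Lem:multiply_transp}. The invariants I would maintain are:
\begin{enumerate}
\item[(I1)] the domains of the $\sigma^{(i)}_r$ partition $[n]$, and $\sigma_r=\prod_i\sigma^{(i)}_r$, where the factors have disjoint supports and so commute;
\item[(I2)] the components of $T_r$ are the blocks of $\pi_r$, and every element of the domain of $\sigma^{(k)}_r$ lies in the component of $k$;
\item[(I3)] $|\mathrm{dom}\,\sigma^{(i)}_r| = 1+\ind_r(i)-\outd_r(i)$, computed in $T_r$.
\end{enumerate}
All three hold trivially at $r=0$. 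Parts (a) and (b) are then (I1)--(I3) at the end. For (c), (I3) gives $s(i)\ge 0$, so it only remains to check that $T$ is a forest.

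For the inductive step, take $s_{ij}$ with $i<j$ and let $\alpha$ be the cycle of $\sigma^{(k)}$ containing $i$. First I must check that some $k\le j$ exists. Because $\tilde f$ is in standard form, every earlier transposition has rank at most $j$. Cycles only move upward, to the larger endpoint of a transposition, so any label index that has ever received a cycle is at most $j$. The index $i$ itself is at most $j$. Hence $k\le j$, and uniqueness follows from (I1).

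Now split into cases.
\begin{itemize}
\item If $k=j$, only step (3b) happens. Multiplying $\sigma^{(j)}$ by $s_{ij}$ changes $\sigma_r$ in the same way, because $i,j\in\mathrm{dom}\,\sigma^{(j)}$ and the factors commute. The domains and $T$ are unchanged, so (I1) and (I3) persist. The partition $\pi$ is also unchanged: $i$ and $j$ already share a block by (I2).
\item If $k\neq j$, moving $\alpha$ keeps the domains a partition and preserves the product, so (I1) holds. The new edge has weight $|\alpha|$, which raises $\outd(k)$ and $\ind(j)$ by $|\alpha|$. This matches the change in domain sizes, so (I3) holds. Then multiplying by $s_{ij}$ preserves (I1) as before.
\end{itemize}

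The crux is that in the case $k\ne j$, the vertices $k$ and $j$ lie in different components of $T_r$, equivalently in different blocks of $\pi_r$. This is what makes each new edge join two components, which gives acyclicity and the updated (I2). The case to rule out is $i$ and $j$ lying in the same block of $\pi_r$ while $k\neq j$.

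In that case, minimality forces step type (a), so $i$ and $j$ lie in the same cycle of $\sigma_r$. Then $j\in\alpha\subseteq\mathrm{dom}\,\sigma^{(k)}$, so I need to exclude $j\in\mathrm{dom}\,\sigma^{(k)}$ for $k\ne j$. I would add a further invariant:
\begin{enumerate}
\item[(I4)] $j\in\mathrm{dom}\,\sigma^{(j)}_r$ at every stage $r$ at which some rank-$j$ transposition is still to come.
\end{enumerate}
Equivalently, the cycle containing $j$ never leaves vertex $j$ before all rank-$j$ transpositions are processed. A cycle leaves $j$ only when a transposition of rank $j'>j$ is processed, and by standard form all rank-$j$ transpositions come before any of rank $j'>j$. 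So (I4) follows, and this case cannot occur.

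Consequently, in the case $k\ne j$, the vertices $i$ and $j$ are in different blocks of $\pi_r$, and $i$ is in the component of $k$ by (I2). The new edge $\{k,j\}$ therefore merges exactly the two blocks that $s_{ij}$ merges in $\pi$. This maintains (I2) and keeps $T$ acyclic, which completes the induction and proves (c).
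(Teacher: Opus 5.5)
Your proof is correct and follows the same argument as the paper. Both induct over the standard form. Both use standard form to get $k\le j$ and to keep $j$ in the domain of $\sigma^{(j)}$ while rank-$j$ transpositions are processed. Both rule out a cycle in $T$ because $i,j$ lying in the same block but different cycles would be a case (b) step, contradicting minimality. Organizing this as invariants (I1)--(I4), and proving that each new edge joins two distinct components before concluding that components match blocks, makes explicit an ordering the paper's proof of (a) leaves implicit.
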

\begin{proof}
We may assume that $f$ is in standard form. We will show that the claims hold for each initial segment of $f$---this is trivial for the empty factorization. Suppose that in step (3) we consider a transposition $s_{ij}$. Since $i$ appears in $\sigma^{(k)}$, $i$ and $k$ must lie in the same connected component. Hence adding the edge $\{k,j\}$ merges the connected components containing $i$ and $j$, which matches the change in $\pi$. Also the only change to the product $\sigma^{(1)} \cdots \sigma^{(n)}$ occurs in step (3b), where it gets multiplied by $s_{ij}$, which matches the change in $\sigma$. This proves (a).

For part (b), since $f$ is in standard form, the transpositions before $s_{ij}$ in $f$ all have rank at most $j$. Thus no vertex greater than $j$ has been touched, so $k \leq j$ (as claimed in the algorithm), and $j$ is still in the domain of $\sigma^{(j)}$. Adding a new edge from $k$ to $j$ in step (3a) increases $\outd(k)$ and $\ind(j)$ by $|\alpha|$, and hence it decreases $s(k)$ and increases $s(j)$ by $|\alpha|$. This agrees with the change in the size of the domains of $\sigma^{(j)}$ and $\sigma^{(k)}$ when moving the cycle $\alpha$. This proves part (b) and shows that $s(j) \geq 0$ for all $j$ as required for factorization forests.

All that remains for part (c) is to show that $T$ has no cycles. Suppose that we add the edge $\{k,j\}$ in step (3a) and create a cycle (or parallel edge). Then $j$ and $k$ and hence also $i$ must have been in the same part of $\pi$. But $i$ was in the cycle $\alpha$ of $\sigma^{(k)}$ which does not contain $j$, so $i$ and $j$ were not in the same cycle of $\sigma$. By Lemma~\ref{Lem:minlen_pp}, this implies that $f$ cannot be minimal.
\end{proof}

It follows that if $f$ is a minimal transitive factorization, then its factorization forest is a spanning tree. The next proposition guarantees that this spanning tree is contained in the same quasi-threshold graphs as $f$ is supported on.

\begin{proposition}\label{prop:spanning_tree}
    Let $f$ be a minimal transitive factorization with factorization tree $T$ obtained from Algorithm~\ref{alg:labelled}. Then $f$ is supported on a quasi-threshold graph $G$ if and only if $T$ is a spanning tree of $G$.
\end{proposition}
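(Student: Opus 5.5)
We reduce to the standard form $\tilde f$ using Lemma~\ref{lem:std_form}: $f$ is supported on $G$ if and only if $\tilde f$ is, and the tree $T$ is computed from $\tilde f$. So we assume $f=\tilde f$ is in standard form. Let $F$ be the naturally labeled rooted forest with $G=G(F)$. Everything rests on one invariant of Algorithm~\ref{alg:labelled}, checked by induction on the transpositions processed.

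\textbf{Invariant.} At every stage, if an element $x$ lies in the domain of $\sigma^{(k)}$, then $x$ and $k$ are comparable in $F$, in fact $x \preceq k$. This holds if either (i) $f$ is supported on $G$, or (ii) all edges of $T$ built so far are edges of $G$.

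To see this under (i), consider the step processing $s_{ij}$ with $i<j$, where $i$ lies in the cycle $\alpha$ of $\sigma^{(k)}$ and $k\le j$.
\begin{itemize}
\item By induction $i \preceq k$, and $i\prec j$ since $\{i,j\}\in G$ is naturally labeled.
\item Two elements lying below $i$ in a rooted forest are comparable, since the upset of $i$ is a chain. Hence $k$ and $j$ are comparable, and $k\le j$ gives $k\preceq j$.
\item So the new edge $\{k,j\}$ lies in $G$. Moving $\alpha$ to $\sigma^{(j)}$ keeps every element of $\alpha$ below $j$ by transitivity.
\item Multiplying by $s_{ij}$ only changes cycle structure within the domain of $\sigma^{(j)}$, which now contains $i$ and $j$, so the invariant survives.
\end{itemize}
This proves the forward direction: every edge added is comparable, so $T\subseteq G$.

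For the converse, assume $T$ is a spanning tree of $G$ and again track the invariant. When $s_{ij}$ is processed:
\begin{itemize}
\item If $k=j$, then $i\preceq j$ by the invariant, so $\{i,j\}\in G$.
\item If $k\ne j$, the added edge $\{k,j\}$ is an edge of $T\subseteq G$, so $k\preceq j$. By the invariant $i\preceq k$, hence $i\preceq j$, and again $\{i,j\}\in G$.
\end{itemize}
In both cases the invariant propagates exactly as before, so every transposition of $\tilde f$ lies in $G$.

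The main subtlety is the forward step's claim that $k$ and $j$ are comparable. It uses the rooted-forest property that all elements above a given one form a chain, together with the orientation facts $i\preceq k$ and $k\le j$, which are supplied by the standard-form ordering (as in the proof of part (b) of the previous proposition). The invariant must be stated as ``$x \preceq k$'', not mere comparability, for this to go through.
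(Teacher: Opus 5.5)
Your proof is correct and is essentially the paper's argument: the same invariant ($x$ in the domain of $\sigma^{(v)}$ implies $x \preceq v$), propagated through step (3) of Algorithm~\ref{alg:labelled}, differing only cosmetically in that the paper runs both directions at once using two forests $F$ and $F'$ (one for $G$, one for a graph containing $T$), whereas you treat the two directions separately. One wording slip: in the forward step, $k$ and $j$ lie \emph{above} $i$ (both are $\succeq i$), not below it; your justification, that the upset of $i$ is a chain, is the correct one.
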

\begin{proof}
    Since $f$ and its standard form are supported on the same quasi-threshold graphs by Lemma~\ref{lem:std_form}, we may assume that $f$ is in standard form. Let quasi-threshold graphs $G$ and $G'$ be the comparability graphs of rooted forests $F$ and $F'$, and suppose that $f$ is supported on $G$ while $T$ is a subgraph of $G'$. We first show by induction that after each iteration of step (3) in Algorithm~\ref{alg:labelled}, if $u$ lies in the domain of $\sigma^{(v)}$, then $u\preceq v$ in both $F$ and $F'$.

    The claim is trivial at the initialization of $T$. We need to check that if, when we consider a transposition $s_{ij}$ in step (3), we move a cycle $\alpha$ from $\sigma^{(k)}$ to $\sigma^{(j)}$, then the claim remains valid. By transitivity, it suffices to show that $k \prec j$ in both $F$ and $F'$. For $F'$, this holds since $\{k,j\}$ is an edge of $T \subseteq G'$. For $F$, since $i$ lies in the domain of $\sigma^{(k)}$, $i \preceq k$ in $F$. Since $f$ contains $s_{ij}$ and is supported on $G$, $j \succ i$ in $F$. But $F$ is a rooted tree, so $j \succ i$ and $i \preceq k$ (and $j > k$) imply $j \succ k$ in $F$. This proves the claim.
    
    To complete the proof, we need to show that $T$ is a subgraph of $G$ and that $f$ is supported on $G'$. The first holds since $j \succ k$ in $F$ implies that the new edge $\{k,j\}$ of $T$ also lies in $G$. For the second, we have $j \succeq k \succeq i$ in $F'$, so the edge $\{i,j\}$ corresponding to the new transposition $s_{ij}$ is contained in $G'$. 
\end{proof}

By looking at the factorization forest at a vertex $i$, we can analyze the rank $i$ part of the factorization. In the following proposition, let $(\sigma, \pi)$ be the partitioned permutation consisting of the cycles moved along incoming edges at $i$ during the construction of the factorization forest of $f$, all in separate parts. Also let $(\sigma', \pi')$ consist of the cycles moved along outgoing edges at $i$ together with those in the final vertex label $\sigma^{(i)}$, all in a single part.

\begin{proposition} \label{prop:vertexi}
Let $f$ be a minimal factorization in standard form with factorization forest $T$, and let $f_i = (t_1,\dots, t_k)$ be the subsequence of $f$ consisting of the transpositions of rank $i$. Then $(\sigma,\pi) \cdot t_1 \cdots t_k = (\sigma',\pi')$ as in Theorem~\ref{thm:lattice_walk_count}(a), where $(\sigma,\pi)$ and $(\sigma',\pi')$ are as described above.

In particular, if $f$ is a factorization of the identity permutation, then the length of $f_i$ is $l(i) = \deg(i) + s(i) - 1$, where $\deg(i)$ is the unweighted degree of vertex $i$ in $T$.
\end{proposition}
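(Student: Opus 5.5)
We track exactly what happens at vertex $i$ while the rank-$i$ block $f_i$ of the standard-form factorization $f$ is processed by Algorithm~\ref{alg:labelled}. Since $f$ is in standard form, the rank-$i$ transpositions are consecutive. Before they are processed, no transposition of rank $\ge i$ has been read, so $\sigma^{(i)} = (i)$ and no edge at $i$ exists yet.

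\textbf{Step 1: the transpositions of $f_i$.} During the processing of $f_i$, each $s_{ui}$ with $u<i$ does one of two things. It may first pull a cycle $\alpha$ containing $u$ from some $\sigma^{(k)}$ with $k<i$, creating an incoming edge $(k,i)$ of weight $|\alpha|$. Afterwards it multiplies $\sigma^{(i)}$ by $s_{ui}$. Thus the rank-$i$ segment realizes exactly the product
\[
\bigl((i)\text{ together with the incoming cycles}\bigr)\cdot t_1\cdots t_k
\]
on the domain $V$, where $V$ is $\{i\}$ together with the incoming cycles. Each transposition involves $i$, so $f_i$ is supported on the star with center $i$ on $V$.

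\textbf{Step 2: the start and end of $f_i$.} The incoming cycles are pulled from other vertices. In the partitioned-permutation bookkeeping restricted to $V$, they start in separate blocks, which is the $(\sigma,\pi)$ described above. The type of $\sigma$ equals the type of $\pi$, since each block is a single cycle.

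After $f_i$, $\sigma^{(i)}$ contains every cycle that will later leave $i$ along an outgoing edge, together with the final label $\sigma^{(i)}$. This holds because later transpositions have rank $>i$: they only ever remove whole cycles from $\sigma^{(i)}$ and never multiply $\sigma^{(i)}$. All these cycles lie in one block, because each incoming cycle was merged with $i$ via a case-(c) step. So the product is $(\sigma',\pi')$ with $\pi'=\hat 1_V$.

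\textbf{Step 3: minimality.} Since $f$ is minimal, Lemma~\ref{Lem:minlen_pp} says every step of $f$ is of type (a) or (c). Restricted to the block $V$, this means $f_i$ attains the lower bound $\ell(\lambda)+\ell(\mu)-1$ from Theorem~\ref{thm:lattice_walk_count}(a). Cycles and blocks outside $V$ are untouched, so the counts localize. This verifies the setting of Theorem~\ref{thm:lattice_walk_count}(a).

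\textbf{Step 4: the identity case.} Here every cycle has length $1$, so each incoming edge moves a single fixed point, and in particular all edge weights are $1$. Then $\ell(\lambda)$ is the number of incoming edges, and $\ell(\mu)$ is the number of outgoing edges plus the number of cycles of the final $\sigma^{(i)}$. The latter equals $s(i)$ by part (b) of the preceding proposition, since all cycles are fixed points. Hence
\[
l(i)=\ell(\lambda)+\ell(\mu)-1=\deg(i)+s(i)-1.
\]

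\textbf{Main obstacle.} The delicate point is justifying that the outgoing cycles are literally cycles of $\sigma^{(i)}$ at the end of $f_i$. Equivalently, we must show that later steps never alter $\sigma^{(i)}$ except by removing whole cycles. This is what lets the output of the star factorization be read off from $T$.
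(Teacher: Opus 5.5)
Your argument follows the paper's proof. You track the label $\sigma^{(i)}$ through the consecutive rank-$i$ block. You read $(\sigma,\pi)$ off the cycles arriving along incoming edges, and $(\sigma',\pi')$ off the cycles later leaving along outgoing edges together with the final label. You then use that higher-rank steps only remove whole cycles from $\sigma^{(i)}$.

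Steps 1--3 are sound and somewhat more detailed than the paper, which simply asserts that $f_i$ is minimal when $f$ is. Step 3 would be fully precise if you said why the case analysis of Lemma~\ref{Lem:minlen_pp} localizes to $V$:
\begin{itemize}
\item A case-(a) step for $f$ is case (a) on $V$, since the cycle through $i$ is the same.
\item A case-(c) step $s_{ui}$ for $f$ is case (c) on $V$. Everything already pulled into $\sigma^{(i)}$ lies in the component of $i$, so the cycle of $u$ has not yet been merged.
\end{itemize}

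Step 4, however, contains a false statement. In the identity case it is not true that every cycle has length $1$, that each incoming edge carries a fixed point, or that all edge weights are $1$. Only the final product is trivial; the cycles transported along edges are intermediate cycles and can be long. For example, $f=(s_{12},s_{23},s_{34},s_{14},s_{24},s_{34})$ is a minimal transitive factorization of the identity in $S_4$ in standard form. Algorithm~\ref{alg:labelled} moves the cycles $(1\;2)$ and $(1\;2\;3)$ along the edges $\{2,3\}$ and $\{3,4\}$. This produces the tree $T_1$ of Figure~\ref{fig:k4_nid}, with weights $1,2,3$; many other trees in that figure also have edges of weight at least $2$.

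Your final count does not actually use this claim. Regardless of the weights, $\ell(\lambda)$ is the number of incoming edges, and $\ell(\mu)$ is the number of outgoing edges plus the number of cycles in the final label. What you need is only that the final label consists of $s(i)$ fixed points. The correct justification is part (a) of the preceding proposition: the labels have disjoint domains and $\sigma^{(1)}\cdots\sigma^{(n)}=\id$, so each final label is the identity on its domain, which has size $s(i)$ by part (b). Replace the faulty sentence with this and your proof is complete and agrees with the paper's.
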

\begin{proof}
    Consider the change to the vertex label $\sigma^{(i)}$ in Algorithm~\ref{alg:labelled} when we consider the transpositions in $f_i$. For each incoming edge $e$ at vertex $i$, a cycle of length $w(e)$ from a different connected component is added to $\sigma^{(i)}$; we may treat these as the single partitioned permutation $(\sigma, \pi)$. This is then multiplied by the transpositions in $f_i$. We know the result is $(\sigma', \pi')$ since later, one cycle is removed along each outgoing edge $e$, resulting in $\sigma^{(i)}$. This therefore matches the situation in Theorem~\ref{thm:lattice_walk_count}(a). Clearly $f_i$ must have minimal length if $f$ is minimal.
    
    For the final claim, use Theorem~\ref{thm:lattice_walk_count}(a), noting that $\sigma^{(i)}$ consists of $s(i)$ $1$-cycles, so $f_i$ has length $c(\sigma)+c(\sigma') - 1 = \deg(i) + s(i) - 1$.
\end{proof}

\subsection{Counting factorizations}

We are now ready to combine the previous results to determine the number of minimal transitive factorizations corresponding to each factorization tree $T$. Summing over the trees that are contained in a given quasi-threshold graph will then yield our main result. 

Recall from above that for a vertex $i$ in a factorization tree $T$, 
\begin{itemize}
\item $\deg(i)$ is the unweighted degree of $i$,
\item $s(i) = 1 + \ind(i) - \outd(i)$, and
\item $l(i) = \deg(i) + s(i) - 1$.
\end{itemize}
Note that if we sum over all vertices $i$,
\begin{align*}
\sum_{i=1}^n \deg(i) &= 2n-2, \\
\sum_{i=1}^n s(i) &= n, \\
\sum_{i=1}^n l(i) &= (2n-2) + n - n = 2n-2.
\end{align*}

Define the \emph{weight} of a factorization tree $T = ([n], E)$ to be
\[ w(T) = \displaystyle{\binom{n}{s(1),\dots,s(n)}} \prod_{e \in E}w(e). \]

\begin{theorem}\label{thm:count_N_id}
    For any factorization tree $T$ on vertex set $[n]$, the number of minimal transitive factorizations $f$ of the identity with factorization tree $T$ is
    \begin{equation*}\label{eq:id_tree_star}
        \numftree{T} = \frac{(2n-2)!}{n!} \cdot w(T).
    \end{equation*}
\end{theorem}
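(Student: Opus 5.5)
Since the number of factorizations with a given standardization is a multinomial coefficient, I would first count standard-form factorizations with tree $T$ and then multiply. By Lemma~\ref{lem:std_form}, every $f$ standardizes to a unique $\tilde f$, and $f$ and $\tilde f$ have the same factorization tree, because Algorithm~\ref{alg:labelled} first passes to the standard form. Each standard $\tilde f$ with tree $T$ has exactly $l(i)$ transpositions of rank $i$ by Proposition~\ref{prop:vertexi}. It therefore has $\binom{2n-2}{l(1),\dots,l(n)} = (2n-2)!/\prod_i l(i)!$ preimages. So it suffices to show that the number of standard-form minimal transitive factorizations of the identity with tree $T$ is
\[\frac{\prod_i l(i)!}{n!}\, w(T) = \frac{\prod_i l(i)!}{\prod_i s(i)!}\prod_{e\in E} w(e).\]

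A standard factorization is a concatenation $f_1 f_2 \cdots f_n$ by rank. I would build it vertex by vertex in increasing order, tracking the labels $\sigma^{(k)}$ of Algorithm~\ref{alg:labelled}. When processing vertex $i$, the incoming edges $(k,i)$ with $k<i$ are already determined by $T$, as is the cycle $\alpha_k$ of length $w(k,i)$ that moves from $\sigma^{(k)}$. The outgoing edges of $i$ prescribe how many cycles of each length $\sigma^{(i)}$ must contain when $f_i$ ends: one cycle of length $w(i,j)$ for each $j>i$, plus $s(i)$ fixed points, since the final product is the identity. By Proposition~\ref{prop:vertexi}, $f_i$ is then a minimal factorization as in Theorem~\ref{thm:lattice_walk_count}. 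Here $\lambda$ is the multiset of incoming weights, and $\mu$ is the multiset of outgoing weights together with $s(i)$ ones, the final cycle containing $i$ being one of these ones.

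The delicate point is that Theorem~\ref{thm:lattice_walk_count}(b) counts factorizations with a given \emph{type} $\mu$. For the tree to be exactly $T$, however, the particular cycle that later goes to vertex $j$ must be the one sent along edge $(i,j)$, and which cycle that is gets decided by the later factorizations $f_j$. I would resolve this as follows. Given $f_i$ of type $\mu$, distinct labelled outcomes correspond to bijections between the resulting cycles and the outgoing edges plus fixed-point slots, consistent with lengths. Matching cycles of equal length to outgoing edges in all possible ways cancels the $\aut(\mu)$ except for the indistinguishable fixed-point slots, which contribute $1/s(i)!$. One must check that the cycle containing $i$ is one of the $s(i)$ fixed points and is not separately distinguished. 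The number of choices for $f_i$ is then
\[\frac{(\ell(\lambda)+\ell(\mu)-1)!}{s(i)!}\prod_{k<i,\,(k,i)\in E} w(k,i) = \frac{l(i)!}{s(i)!}\prod_{\text{in-edges}} w(e),\]
using $\ell(\lambda)+\ell(\mu)-1 = \deg(i)+s(i)-1 = l(i)$. Since every edge is incoming at exactly one vertex, taking the product over $i$ gives exactly the target expression. Multiplying by the multinomial then yields $\frac{(2n-2)!}{n!}w(T)$.

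Conversely, I must check that every such choice of the $f_i$ assembles into a minimal transitive factorization of the identity whose algorithm output is exactly $T$. Minimality and transitivity follow from the length count $\sum l(i) = 2n-2$ together with connectivity of $T$. The main obstacle is the bijectivity bookkeeping in the previous paragraph. It requires showing that the later choice of which cycle of $\sigma^{(i)}$ is touched by $f_j$ corresponds precisely to assigning cycles to out-edges. In the up-step analysis of Theorem~\ref{thm:lattice_walk_count}, merging a cycle of length $w$ has $w$ element choices; these are the $\prod w(e)$ factors, and they must not be counted twice. I would verify this carefully on the example in Figure~\ref{fig:K4_ex0}.
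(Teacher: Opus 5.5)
Your proposal is correct and is essentially the paper's proof: count standard forms vertex by vertex using Theorem~\ref{thm:lattice_walk_count}(b), multiply by the $\aut(\mu)/s(i)!$ ways to assign the resulting cycles to out-edges and to the $s(i)$ fixed-point slots, take the product over vertices (each edge contributing its weight once, at its larger endpoint), and multiply by $\binom{2n-2}{l(1),\dots,l(n)}$. One slip: the final cycle containing $i$ need not be one of the $s(i)$ fixed points, and in fact it must leave along an out-edge whenever $s(i)=0$ (e.g.\ on $[3]$ with $w(1,2)=1$, $w(2,3)=2$, the cycle $(1\;2)$ containing $2$ is sent to vertex $3$); nothing in your count depends on this claim, because neither Theorem~\ref{thm:lattice_walk_count}(b) nor the assignment count singles out that cycle.
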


\begin{proof}
    To choose a minimal transitive factorization of the identity with factorization tree $T$, we need to do the following:
    \begin{itemize}
    \item At each vertex $i$ from smallest to largest, given disjoint cycles $(\sigma, \pi)$ assigned to the incoming edges,
        \begin{itemize} 
            \item choose $f_i = (t_1, \dots, t_{l(i)})$ so that $(\sigma',\pi') = (\sigma,\pi) \cdot t_1 \cdots t_{l(i)}$ has the correct cycle type (as determined by the weights of outgoing edges and $s(i)$ using Proposition~\ref{prop:vertexi}); then
            \item assign cycles of $\sigma'$ to outgoing edges at $i$ of the correct weight.
        \end{itemize}
    \item Choose a factorization with standard form $f_1 f_2 \cdots f_n$. 
    \end{itemize}
    Note that since $\sum_i l(i) = 2n-2 = \len(\id)$, our choices of $f_i$ are guaranteed to yield a minimal transitive factorization of the identity.

    For a given vertex $i$, let $\lambda$ be the partition of incoming weights at $i$, and let $\mu$ be the partition of outgoing weights at $i$ combined with $s(i)$ parts of size $1$. Then by Proposition~\ref{prop:vertexi} and Theorem~\ref{thm:lattice_walk_count}(b), the number of choices for $f_i$ is $\frac{l(i)!}{\aut(\mu)} \cdot \Pi(\lambda)$.
    The number of ways to assign the cycles of $\sigma'$ to the outgoing edges at $i$ is $\frac{\aut(\mu)}{s(i)!}$ (since the $s(i)$ $1$-cycles that stay at vertex $i$ are indistinguishable). Therefore the total number of choices at vertex $i$ is $\frac{l(i)!}{s(i)!} \cdot \Pi(\lambda)$. Multiplying over all vertices gives \[\prod_{i=1}^n \frac{l(i)!}{s(i)!} \cdot \prod_{e \in E} w(e)\]
    factorizations in standard form corresponding to $T$. 

    By Lemma~\ref{lem:std_form}, each standard form yields one factorization for each permutation of its rank sequence, which contains $l(i)$ entries equal to $i$. Thus the total number of factorizations is 
    \begin{align*}
    \numftree{T} &= \prod_{i=1}^n \frac{l(i)!}{s(i)!} \cdot \prod_{e \in E} w(e) \cdot \binom{2n-2}{l(1), \dots, l(n)}\\
    &= \frac{(2n-2)!}{n!} \binom{n}{s(1), \dots, s(n)} \cdot \prod_{e \in E} w(e)\\
    &= \frac{(2n-2)!}{n!} \cdot w(T),
    \end{align*}
    as desired.
\end{proof}

We now present our main theorem giving a combinatorial formula for $\numfact{\id}{G}$, the number of minimal transitive factorizations of the identity supported on a quasi-threshold graph $G$.

\begin{theorem}\label{thm:count_quasi_threshold}
    Let $G$ be a quasi-threshold graph on $n$ vertices. Then
    \begin{equation*}\label{eq:main_thm}
        \numfact{\id}{G} = \frac{(2n-2)!}{n!}\sum_{T} w(T),
    \end{equation*}
    where $T$ ranges over factorization trees that are spanning trees of $G$.
\end{theorem}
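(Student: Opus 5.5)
The plan is to partition the set of minimal transitive factorizations of the identity supported on $G$ according to the factorization tree produced by Algorithm~\ref{alg:labelled}, and then count each class using Theorem~\ref{thm:count_N_id}.

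First, let $f$ be a minimal transitive factorization of $\id$. Its factorization forest is a factorization tree $T(f)$ on $[n]$: it is a forest by the preceding proposition, it is connected because $\pi=\hat 1$, and it satisfies the flow constraints. The algorithm is deterministic, so $f \mapsto T(f)$ is a well-defined map from the set of all minimal transitive factorizations of $\id$ in $S_n$ to the set of factorization trees on $[n]$. The fibers of this map therefore partition the domain. By Theorem~\ref{thm:count_N_id}, the fiber over $T$ has size $\numftree{T} = \frac{(2n-2)!}{n!}\,w(T)$. One should check that Theorem~\ref{thm:count_N_id} counts exactly the fiber, that is, all minimal transitive factorizations of $\id$ (with no support condition) whose factorization tree is $T$. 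Its proof does this.

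Next, we restrict to those $f$ supported on $G$. By Proposition~\ref{prop:spanning_tree}, a minimal transitive factorization $f$ is supported on the quasi-threshold graph $G$ if and only if $T(f)$ is a spanning tree of $G$. So the factorizations counted by $\numfact{\id}{G}$ are exactly the union of the fibers over the factorization trees $T$ that are spanning trees of $G$. Summing over these fibers gives
\[
\numfact{\id}{G} = \sum_{T \subseteq G} \numftree{T} = \frac{(2n-2)!}{n!}\sum_{T} w(T).
\]

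The only point that needs care is that Proposition~\ref{prop:spanning_tree} is an equivalence applied to the tree of $f$ itself. Being supported on $G$ is thus a property of the fiber: either every $f$ in a fiber is supported on $G$ or none is. This uses both directions of the proposition. The forward direction is needed so that no supported $f$ lands outside the sum, and the converse so that every $f$ in a fiber over a spanning tree of $G$ is counted. Once this is in place, the result is a direct combination of the two cited results, and I expect no further obstacle.
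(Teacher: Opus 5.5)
Your proposal is correct and is the paper's argument: partition the minimal transitive factorizations of the identity by the factorization tree from Algorithm~\ref{alg:labelled}, count each fiber with Theorem~\ref{thm:count_N_id}, and use both directions of Proposition~\ref{prop:spanning_tree} to keep exactly the fibers over spanning trees of $G$. As in the paper, this relies on the standing assumption that $G$ is the comparability graph of a naturally labeled rooted forest, which Proposition~\ref{prop:spanning_tree} needs.
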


\begin{proof}
By Proposition~\ref{prop:spanning_tree}, we need only sum Theorem~\ref{thm:count_N_id} over factorization trees that are spanning trees of $G$.
\end{proof}

We present an example calculating all possible factorization trees $T$ supported on $K_4$.

\begin{example}\label{ex:K_4}
Figure~\ref{fig:k4_nid} shows all factorization trees $T_j$ supported on $K_4$ together with their weights $w(T_j)$. (The dots around a vertex $i$ indicate $s(i)$.) For example, for $T_3$: 
\[
    w(T_{3})  = \binom{n}{s(4),s(3)}\displaystyle \prod_{e \in E} w(e) 
     = \binom{4}{2,2} \cdot 2
     = 12.  \]

If we sum $w(T_j)$ over all trees in Figure~\ref{fig:k4_nid}, then we obtain
\[\sum_{j=1}^{16} w(T_j) = 96, \]
and thus we have
\[ \numfact{\id}{K_4} = \frac{(2n-2)!}{n!} \cdot \sum_{j=1}^{16} w(T_j) = 30 \cdot 96 = 2880.\]

Once we have a list of all possible factorization trees for the complete graph $K_n$, for any other quasi-threshold graph $G$ on $n$ vertices, we only need to sum up $N(T_j)$ over the spanning trees of $G$ to obtain $\numfact{\id}{G} = \sum_{T_j} \numftree{T_j}$. For example, if $G$ is the quasi-threshold graph shown in Figure~\ref{fig:add_up_trees}, then it contains factorization trees $T_6$, $T_7$, and $T_{12}$. Since $w(T_6) + w(T_7) + w(T_{12}) = 2 + 4 + 1 = 7$, we get that $\numfact{\id}{G} = \frac{(2n-2)!}{n!} \cdot 7 = 210$.
\end{example}

\begin{figure}
    \centering
\begin{tikzpicture}[
    node/.style={circle, draw=black, thick, minimum size=4mm, inner sep=1pt},
    e/.style={fill=white, text=red, circle, inner sep=.7pt, node font=\footnotesize},
    dot/.style={fill=blue, circle, inner sep=1.2pt},
    scale=.9
  ]
  \begin{scope}
    %Nodes
    \node[node] at (0,0) (1) {1};
    \node[node] at (0,1) (2) {2};
    \node[node] at (0,2) (3) {3};
    \node[node] at (0,3) (4) {4};
    %Lines
    \draw[-] (1) to node[e] {1} (2);
    \draw[-] (2) to node[e] {2} (3);
    \draw[-] (3) to node[e] {3} (4);
    \node[dot] at (4.0){};
    \node[dot] at (4.60){};
    \node[dot] at (4.120){};
    \node[dot] at (4.180){};

    \node at (0,-1){$w(T_1)=6$};
  \end{scope}
  \begin{scope}
[shift={(3,0)}]
    %Nodes
    \node[node] at (0,0) (1) {1};
    \node[node] at (0,1) (2) {2};
    \node[node] at (0,2) (3) {3};
    \node[node] at (0,3) (4) {4};
    %Lines
    \draw[-] (1) to node[e] {1} (2);
    \draw[-] (2) to node[e] {2} (3);
    \draw[-] (3) to node[e] {2} (4);
    \node[dot] at (3.180){};
    \node[dot] at (4.30){};
    \node[dot] at (4.90){};
    \node[dot] at (4.150){};

    \node at (0,-1){$w(T_2)=16$};
  \end{scope}
  \begin{scope}
[shift={(6,0)}]
    %Nodes
    \node[node] at (0,0) (1) {1};
    \node[node] at (0,1) (2) {2};
    \node[node] at (0,2) (3) {3};
    \node[node] at (0,3) (4) {4};
    %Lines
    \draw[-] (1) to node[e] {1} (2);
    \draw[-] (2) to node[e] {2} (3);
    \draw[-] (3) to node[e] {1} (4);
    \node[dot] at (3.210){};
    \node[dot] at (3.150){};
    \node[dot] at (4.120){};
    \node[dot] at (4.60){};

    \node at (0,-1){$w(T_3)=12$};
  \end{scope}
  \begin{scope}
[shift={(9,0)}]
    %Nodes
    \node[node] at (0,0) (1) {1};
    \node[node] at (0,1) (2) {2};
    \node[node] at (0,2) (3) {3};
    \node[node] at (0,3) (4) {4};
    %Lines
    \draw[-] (1) to node[e] {1} (2);
    \draw[-] (2) to node[e] {1} (3);
    \draw[-] (3) to node[e] {2} (4);
    \node[dot] at (2.180){};
    \node[dot] at (4.30){};
    \node[dot] at (4.90){};
    \node[dot] at (4.150){};

    \node at (0,-1){$w(T_4)=8$};
  \end{scope}
  \begin{scope}
[shift={(12,0)}]
    %Nodes
    \node[node] at (0,0) (1) {1};
    \node[node] at (0,1) (2) {2};
    \node[node] at (0,2) (3) {3};
    \node[node] at (0,3) (4) {4};
    %Lines
    \draw[-] (1) to node[e] {1} (2);
    \draw[-] (2) to node[e] {1} (3);
    \draw[-] (3) to node[e] {1} (4);
    \node[dot] at (2.180){};
    \node[dot] at (3.180){};
    \node[dot] at (4.60){};
    \node[dot] at (4.120){};

    \node at (0,-1){$w(T_5)=12$};
  \end{scope}
  \begin{scope}
[shift={(-1.5,-4.5)}]
    %Nodes
    \node[node] at (-.5,0) (1) {1};
    \node[node] at (-.5,1) (2) {2};
    \node[node] at (.5,1) (3) {3};
    \node[node] at (0,2) (4) {4};
    %Lines
    \draw[-] (1) to node[e] {1} (2);
    \draw[-] (2) to node[e] {2} (4);
    \draw[-] (3) to node[e] {1} (4);
    \node[dot] at (4.0){};
    \node[dot] at (4.60){};
    \node[dot] at (4.120){};
    \node[dot] at (4.180){};

    \node at (0,-1){$w(T_6)=2$};
  \end{scope}
  \begin{scope}
[shift={(1.5,-4.5)}]
    %Nodes
    \node[node] at (-.5,0) (1) {1};
    \node[node] at (-.5,1) (2) {2};
    \node[node] at (.5,1) (3) {3};
    \node[node] at (0,2) (4) {4};
    %Lines
    \draw[-] (1) to node[e] {1} (2);
    \draw[-] (2) to node[e] {1} (4);
    \draw[-] (3) to node[e] {1} (4);
    \node[dot] at (2.180){};
    \node[dot] at (4.30){};
    \node[dot] at (4.90){};
    \node[dot] at (4.150){};

    \node at (0,-1){$w(T_7)=4$};
  \end{scope}
  \begin{scope}
[shift={(4.5,-4.5)}]
    %Nodes
    \node[node] at (-.5,0) (1) {1};
    \node[node] at (-.5,1) (2) {3};
    \node[node] at (.5,1) (3) {2};
    \node[node] at (0,2) (4) {4};
    %Lines
    \draw[-] (1) to node[e] {1} (2);
    \draw[-] (2) to node[e] {2} (4);
    \draw[-] (3) to node[e] {1} (4);
    \node[dot] at (4.0){};
    \node[dot] at (4.60){};
    \node[dot] at (4.120){};
    \node[dot] at (4.180){};

    \node at (0,-1){$w(T_8)=2$};
  \end{scope}
  \begin{scope}
[shift={(7.5,-4.5)}]
    %Nodes
    \node[node] at (-.5,0) (1) {1};
    \node[node] at (-.5,1) (2) {3};
    \node[node] at (.5,1) (3) {2};
    \node[node] at (0,2) (4) {4};
    %Lines
    \draw[-] (1) to node[e] {1} (2);
    \draw[-] (2) to node[e] {1} (4);
    \draw[-] (3) to node[e] {1} (4);
    \node[dot] at (2.180){};
    \node[dot] at (4.30){};
    \node[dot] at (4.90){};
    \node[dot] at (4.150){};

    \node at (0,-1){$w(T_9)=4$};
  \end{scope}
  \begin{scope}
[shift={(10.5,-4.5)}]
    %Nodes
    \node[node] at (-.5,0) (1) {2};
    \node[node] at (-.5,1) (2) {3};
    \node[node] at (.5,1) (3) {1};
    \node[node] at (0,2) (4) {4};
    %Lines
    \draw[-] (1) to node[e] {1} (2);
    \draw[-] (2) to node[e] {2} (4);
    \draw[-] (3) to node[e] {1} (4);
    \node[dot] at (4.0){};
    \node[dot] at (4.60){};
    \node[dot] at (4.120){};
    \node[dot] at (4.180){};

    \node at (0,-1){$w(T_{10})=2$};
  \end{scope}
  \begin{scope}
[shift={(13.5,-4.5)}]
    %Nodes
    \node[node] at (-.5,0) (1) {2};
    \node[node] at (-.5,1) (2) {3};
    \node[node] at (.5,1) (3) {1};
    \node[node] at (0,2) (4) {4};
    %Lines
    \draw[-] (1) to node[e] {1} (2);
    \draw[-] (2) to node[e] {1} (4);
    \draw[-] (3) to node[e] {1} (4);
    \node[dot] at (2.180){};
    \node[dot] at (4.30){};
    \node[dot] at (4.90){};
    \node[dot] at (4.150){};

    \node at (0,-1){$w(T_{11})=4$};
  \end{scope}
  \begin{scope}
[shift={(0,-9)}]
    %Nodes
    \node[node] at (-1,.5) (1) {1};
    \node[node] at (0,0) (2) {2};
    \node[node] at (1,.5) (3) {3};
    \node[node] at (0,1.5) (4) {4};
    %Lines
    \draw[-] (1) to node[e] {1} (4);
    \draw[-] (2) to node[e] {1} (4);
    \draw[-] (3) to node[e] {1} (4);
    \node[dot] at (4.0){};
    \node[dot] at (4.60){};
    \node[dot] at (4.120){};
    \node[dot] at (4.180){};

    \node at (0,-1){$w(T_{12})=1$};
  \end{scope}
  \begin{scope}
[shift={(3,-9)}]
    %Nodes
    \node[node] at (0,0) (1) {1};
    \node[node] at (0,1) (2) {2};
    \node[node] at (-.5,2) (3) {3};
    \node[node] at (.5,2) (4) {4};
    %Lines
    \draw[-] (1) to node[e] {1} (2);
    \draw[-] (2) to node[e] {1} (3);
    \draw[-] (2) to node[e] {1} (4);
    \node[dot] at (3.60){};
    \node[dot] at (3.120){};
    \node[dot] at (4.60){};
    \node[dot] at (4.120){};

    \node at (0,-1){$w(T_{13})=6$};
  \end{scope}
  \begin{scope}
[shift={(6,-9)}]
    %Nodes
    \node[node] at (-.5,0) (1) {1};
    \node[node] at (.5,0) (2) {2};
    \node[node] at (0,1) (3) {3};
    \node[node] at (0,2) (4) {4};
    %Lines
    \draw[-] (1) to node[e] {1} (3);
    \draw[-] (2) to node[e] {1} (3);
    \draw[-] (3) to node[e] {3} (4);
    \node[dot] at (4.0){};
    \node[dot] at (4.180){};
    \node[dot] at (4.60){};
    \node[dot] at (4.120){};

    \node at (0,-1){$w(T_{14})=3$};
  \end{scope}
  \begin{scope}
[shift={(9,-9)}]
    %Nodes
    \node[node] at (-.5,0) (1) {1};
    \node[node] at (.5,0) (2) {2};
    \node[node] at (0,1) (3) {3};
    \node[node] at (0,2) (4) {4};
    %Lines
    \draw[-] (1) to node[e] {1} (3);
    \draw[-] (2) to node[e] {1} (3);
    \draw[-] (3) to node[e] {2} (4);
    \node[dot] at (3.180){};
    \node[dot] at (4.30){};
    \node[dot] at (4.90){};
    \node[dot] at (4.150){};

    \node at (0,-1){$w(T_{15})=8$};
  \end{scope}
  \begin{scope}
[shift={(12,-9)}]
    %Nodes
    \node[node] at (-.5,0) (1) {1};
    \node[node] at (.5,0) (2) {2};
    \node[node] at (0,1) (3) {3};
    \node[node] at (0,2) (4) {4};
    %Lines
    \draw[-] (1) to node[e] {1} (3);
    \draw[-] (2) to node[e] {1} (3);
    \draw[-] (3) to node[e] {1} (4);
    \node[dot] at (3.150){};
    \node[dot] at (3.210){};
    \node[dot] at (4.60){};
    \node[dot] at (4.120){};

    \node at (0,-1){$w(T_{16})=6$};
  \end{scope}
\end{tikzpicture}

\caption{The $16$ factorization trees on $[4]$ with their weights. Dots indicate the value of $s(i)$ at each vertex $i$.}
\label{fig:k4_nid}

\end{figure}

\begin{figure}
    \centering
  \begin{tikzpicture}[
node/.style={circle, draw=black, thick, minimum size=4mm, inner sep=2.5pt},
]
%Nodes
\node[node] at (0,2.7) (1) {4};
\node[node] at (1,1.8) (4) {3};
\node[node] at (0,1.5) (5) {2};
\node[node] at (-1, 1.8) (3) {1};

%Lines
\draw[-] (1) -- (4);
\draw[-] (1) -- (5);
\draw[-] (1) -- (3);
\draw[-] (3) -- (5);
\end{tikzpicture}
    \caption{A quasi-threshold graph that contains factorization trees $T_6, T_7, T_{12}$ in Figure~\ref{fig:k4_nid}.}
    \label{fig:add_up_trees}
\end{figure}

Note that $K_4$ has $16$ factorization trees, which is exactly the number of (ordinary) spanning trees of $K_4$. In fact, a similar result holds for any quasi-threshold graph $G$: the number of terms in the sum in Theorem~\ref{thm:count_quasi_threshold} is equal to the number of spanning trees of $G$. (See also \cite[Theorem 6.1]{floor_diagrams}.)

\begin{proposition} \label{prop:cayley}
For a quasi-threshold graph $G$, the number of factorization trees that are spanning trees of $G$ is equal to the number of (ordinary) spanning trees of $G$. In particular, there are exactly $n^{n-2}$ factorization trees on $n$ vertices.
\end{proposition}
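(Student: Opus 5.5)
The plan is induction on $n$, peeling off the largest vertex $n$ and showing that factorization trees and ordinary spanning trees obey the same recursion. We may assume $G$ is connected, since otherwise both counts are $0$. Because the rooted forest is naturally labeled and $G$ is connected, $n$ is the root of a single rooted tree. Hence $n$ is comparable to, i.e.\ adjacent in $G$ to, every other vertex. For the same reason, for any subset $C \subseteq [n-1]$ the induced subgraph $G[C]$ is again quasi-threshold (and naturally labeled after order-preserving relabeling), so the induction hypothesis applies to it.

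\emph{Spanning tree side.} Deleting $n$ from a spanning tree of $G$ leaves a spanning forest $F$ of $G - n$ whose components $C_1, \dots, C_r$ are each spanning trees of $G[C_j]$. Conversely, given such a forest, we recover a spanning tree of $G$ by joining each component $C_j$ to $n$ through exactly one of its vertices. Since $n$ is adjacent to everything, there are $|C_j|$ choices for each component, and so
\[\tau(G) = \sum_{F} \prod_{j} |C_j|,\]
where $F$ ranges over spanning forests of $G-n$ and $\tau$ denotes the number of spanning trees.

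\emph{Factorization tree side.} Take a factorization tree $T$ spanning $G$ and delete $n$. The result is a forest whose components are trees $T_j$ on the sets $C_j$, and each $T_j$ is attached to $n$ by a single edge from some $v_j \in C_j$ of weight $w_j$. All edges at $n$ are incoming, so the constraint $s(n) = 1 + \ind(n) \geq 0$ holds automatically. Inside $C_j$, the only change caused by removing the edge $\{v_j, n\}$ is that $\outd(v_j)$ drops by $w_j$. Therefore $T$ is a factorization tree exactly when each $T_j$ (with its inherited weights) is a factorization tree on $C_j$ satisfying $1 \le w_j \le s_{T_j}(v_j)$. Conversely, any such data $(T_j, v_j, w_j)$ glues back to a factorization tree spanning $G$, and the edges $\{v_j, n\}$ lie in $G$.

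Counting the data componentwise, the number of choices on $C_j$ is
\[\sum_{T_j}\sum_{v \in C_j} s_{T_j}(v) = |C_j| \cdot \#\{\text{factorization trees spanning } G[C_j]\},\]
using the identity $\sum_v s(v) = |C_j|$ noted before Theorem~\ref{thm:count_N_id}, applied to a tree on vertex set $C_j$. By the induction hypothesis this equals $|C_j|\,\tau(G[C_j])$. Summing over forests $F$ then reproduces exactly the spanning-tree recursion above, which completes the induction; the base case of one vertex is trivial. Taking $G = K_n$ yields $n^{n-2}$ by Cayley's formula.

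The main point to verify carefully is the equivalence in the factorization tree side: that deleting $n$ affects only the single constraint at each $v_j$, and that the identity $\sum_v s(v) = |C|$ applies to factorization trees on an arbitrary vertex set $C$, with orientation still from smaller to larger label. A related check is that relabeling $C$ order-preservingly to $[|C|]$ preserves both the flow constraints and the quasi-threshold structure. Both hold because only the relative order of the labels enters either condition.
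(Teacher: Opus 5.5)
Your proof is correct and follows essentially the same route as the paper. Both arguments induct by deleting vertex $n$, which is adjacent to every other vertex when $G$ is connected, and match the spanning-tree recursion $\sum_F \prod_j |C_j|$. In both, each part $C_j$ admits exactly $\sum_{v \in C_j} s(v) = |C_j|$ choices of a weighted edge from $v$ to $n$ with $1 \le w \le s(v)$; your explicit check that removing the edges at $n$ only changes $\outd(v_j)$ is just a more careful statement of the paper's remark that adding an edge of weight $m$ decreases $s(i)$ by $m$.
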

\begin{proof}
We induct on the number of vertices of $G$; the result is trivial if $G$ has only one vertex. We may also assume that $G$ is connected.

One can form a spanning tree of $G$ inductively as follows: partition the set of vertices $1, \dots, n-1$ into parts $V_1, \dots, V_k$ and choose a spanning tree for each part. We can then connect vertex $n$ to exactly one vertex from each part in $|V_1| \cdots |V_k|$ ways.

Similarly, one can form a factorization tree that spans $G$ inductively as follows: partition the vertices as before and choose a factorization tree spanning each part. Since any induced subgraph of $G$ is also quasi-threshold, the number of factorization trees spanning $V_i$ is equal to its number of spanning trees by induction. It therefore suffices to show that there are still $|V_1| \cdots |V_k|$ ways to connect vertex $n$ to each part.

Note that since $G$ is a (connected) quasi-threshold graph, vertex $n$ is adjacent to every other vertex. Adding to the factorization tree an edge of weight $m$ from vertex $i$ to vertex $n$ will decrease $s(i)$ by $m$. Therefore to ensure that $s(i)$ remains nonnegative, we can only take $m = 1, \dots, s(i)$. It follows that for any part $V_j$, there are $\sum_{i \in V_j} s(i) = |V_j|$ ways to choose a weighted edge from a vertex in that part to vertex $n$. These choices can be made independently for each part, so there are $|V_1| \cdots |V_k|$ ways to make these choices in total. The result follows.
\end{proof}

\section{Conclusion}\label{sec:conclusion}

We conclude by providing a conjecture and some further discussion. 

\subsection{Cographs} We first define a broader class of graphs that strictly contains the quasi-threshold graphs. 

\begin{definition}
    A graph $G$ is a \textit{cograph} if it can be generated from single-vertex graphs by the operations of disjoint union, join, and complementation. 
\end{definition}

Cographs have many equivalent definitions: for instance, they are the comparability graphs of series-parallel posets, and they are also the graphs containing no induced path on four vertices. See \cite{cograph} for more information.

In general, $\numfact{\id}{G}$ is not always divisible by $\frac{(2n-2)!}{n!}$ for all graphs $G$. For instance, it is not the case for the path on $4$ vertices; see the appendix for other examples. From computer experimentation on all graphs on at most $7$ vertices with SageMath \cite{sage}, this divisibility relation does appear to hold for all cographs, as stated in the following conjecture. 

\begin{conjecture} \label{conj:cograph}
    When $G$ is a cograph, $\numfact{\id}{G}$ is divisible by $\frac{(2n-2)!}{n!}$.
\end{conjecture}

Even though we only conjecture the forward direction of Conjecture~\ref{conj:cograph}, from our experimental data on graphs on at most $7$ vertices, the converse also seems to be true: the divisibility relation does not hold for any connected graph that is not a cograph. It would be interesting to see if the methods of this paper such as factorization trees can be generalized to count the number of factorizations supported on any cograph. %It is perhaps also worth noting that, in the group algebra $\mathbb C[S_n]$, the sum of the transpositions supported on a cograph has only integer eigenvalues, though it is not clear whether this relates to the desired divisibility condition.

\subsection{Tesler matrices}

For any factorization forest $T$, we can construct an upper triangular matrix $A = (a_{ij})_{1 \leq i \leq j \leq n}$ by setting $a_{ii} = s(i)$ and, for $i < j$, $a_{ij} = w(i,j)$ (or $0$ if this edge does not lie in $T$). The resulting matrix then has the property that, for every $i$, \[\sum_{j \geq i} a_{ij} - \sum_{j<i} a_{ji} = 1.\] Nonnegative integer matrices with this property are called \emph{Tesler matrices} \cite{Haglund}. Tesler matrices can also be viewed as the lattice points in a \emph{flow polytope} for $K_{n+1}$ with netflow vector $(1,1,\dots, 1, -n)$ \cite{tesler_polytope}. A formula of Haglund \cite{Haglund} expresses the Hilbert series of the space of diagonal harmonics (which has dimension $(n+1)^{n-1}$) as a $q,t$-weighted sum over Tesler matrices. Although the weight described by Haglund does not quite specialize to the definition of $w(T)$ that we give above, it would be interesting to see if our results have any connection to this work (or related work such as \cite{flow_threshold}), or if our $w(T)$ has a $q$-analogue with notable properties.

\subsection{Labeled floor diagrams}

Factorization trees are equivalent to genus 0 labeled floor diagrams as defined by Fomin and Mikhalkin \cite{floor_diagrams}, who used them to give a combinatorial interpretation of certain Gromov-Witten invariants. We are not aware of any direct connection to the present work, as the weights we define here are not the same as the weights that they use. Floor diagrams are also defined for any genus, so it would be interesting if these had an application to counting, for instance, nonminimal factorizations.

\subsection{Closed formulas}
Although we have given a combinatorial formula for $\numfact{\id}{G}$ in terms of factorization trees, it is not clear how to quickly evaluate this formula in most cases or whether there exists a more succinct formula for this sum. For example, from Theorem~\ref{thm:Hurwitz}, we know that the total weight of all factorization trees on $n$ vertices is $n^{n-2} (n-1)!$. In other words, the average weight of a factorization tree on $n$ vertices is exactly $(n-1)!$. We do not know of an independent combinatorial proof of this fact, which would effectively give a combinatorial proof for Hurwitz's result that $\numfact{\id}{K_n} = n^{n-3} (2n-2)!$.

Through computer experimentation, one can also conjecture formulas for other families of quasi-threshold graphs or cographs, such as
\begin{align*}
\numfact{\id}{K_n - e} &= n^{n-5} (2n-2)!  \cdot \frac{(n-2)(2n-3)}{2}, \\
\numfact{\id}{K_{1,1,n-2}} &= \frac{(2n-2)!}{n!} \cdot \frac{(2n-3)!}{(n-2)!^2}, \\
\numfact{\id}{K_{2,n-2}} &= \frac{(2n-2)!}{n!} \cdot \left(n \binom{2n-3}{n-1} - 2^{2n-3}\right),
\end{align*}
though again it is unclear how or whether the results in this paper can be used to help derive such formulas.

\section{Acknowledgments}

We thank Jun Xing Go for help with coding, and Tianle Li and Runchi Tan for their helpful discussions.

\addcontentsline{toc}{chapter}{\numberline{}References}

\printbibliography[title={References}]
\newpage
\appendix
\section*{Appendix}
We provide computational data for all connected graphs $G$ on 2--5 vertices in the following table along with $\numfact{\id}{G}$. For $G$ such that $\numfact{\id}{G}$ is divisible by $\frac{(2n-2)!}{n!}$, we include the quotient $\numfact{\id}{G}/\frac{(2n-2)!}{n!}$ in the third column. In the fourth column, we indicate whether $G$ is a cograph with ``Y'' or ``N'' and mark quasi-threshold graphs with an additional *. 

\tikzset{every picture/.append style={scale=0.55}}

\begin{table}[h!]
    \centering
    \begin{tabular}{ P{2.5cm}P{1.3cm}P{1.3cm}P{1.4cm}||P{2.5cm}P{1.3cm}P{1.3cm}P{1.4cm}}
        $G$ & $\numfact{\id}{G}$ & \resizebox{.9\hsize}{!}{$\displaystyle\frac{\numfact{\id}{G}}{(2n-2)!/n!}$} & Cograph & $G$ & $\numfact{\id}{G}$ & \resizebox{.9\hsize}{!}{$\displaystyle\frac{\numfact{\id}{G}}{(2n-2)!/n!}$} & Cograph  \\ [3ex] 
        \hline 
        \hline
        &&&&&&&\\

        		\dr{(0,0) node[v]{} -- (1,0)node[v]{}}
		&1 & 1&Y* &     

        %~D_4 plus edge
		\dr{(0,0)node[v]{} -- ++(-150:1)node[v]{} -- ++(-150:1)node[v]{} -- ++(90:1)node[v]{} -- ++(-30:1) -- ++(-30:1)node[v]{}}
		 &2688 & 8 & Y* \\

		%2-path
		\dr{(0,0) node[v]{} -- (1,0)node[v]{} -- (2,0)node[v]{}}
		&4 & 1 &  Y*  & 
        %pentagon
		\dr{(0,0)node[v]{}--++(36:1)node[v]{}--++(-36:1)node[v]{}--++(-108:1)node[v]{}--++(-1,0)node[v]{}--cycle}
    &4480 & - & N \\

		%triangle
		\dr{(0,0) node[v]{} -- (1,0)node[v]{} -- +(120:1)node[v]{} -- cycle}
		&24 & 6 & Y* &
        %diamond+top edge
		\dr{(2,0)node[v]{}--(1,0)node[v]{}--(0,0)node[v]{}--(0,1)node[v]{}--(1,1)node[v]{}--(1,0)node[v]{} (1,0)--(0,1)}
         &12768 & 38 & Y* \\
		
        %3-path
		\dr{(0,0) node[v]{} -- (1,0) node[v]{} -- (2,0) node[v]{} -- (3,0) node[v]{}}
		&42 & - & N & 
		%diamond + side edge
		  \dr{(2,0)node[v]{}--(1,0)node[v]{}--(0,0)node[v]{}--(0,1)node[v]{}--(1,1)node[v]{}--(1,0)node[v]{} (0,0)--(1,1)}  &15616 & - & N \\
		%3-star = D_4
		\dr{(0,0) node[v]{} -- +(150:1) node[v]{}  +(-150:1) node[v]{}--(0,0)--(1,0) node[v]{}}
		&30 & 1 & Y* & %house
		\dr{(0,0)node[v]{}--(-1,0)node[v]{}--(-1,1)node[v]{}--(0,1)node[v]{}--(30:1)node[v]{}--(0,0)--(0,1)node[v]{}}
		&17448 & - & N \\
 
    %triangle+edge
		\dr{(0,0) node[v]{} -- +(150:1) node[v]{} -- +(-150:1) node[v]{}--(0,0)--(1,0) node[v]{}}
		&210 & 7 & Y* & 
    %bowtie
		\dr{(0,0)node[v]{}--++(150:1)node[v]{}--++(0,-1)node[v]{}--++(30:1)--++(30:1)node[v]{}--++(0,-1)node[v]{}--cycle}
    &19824 & 59 & Y*\\

        %square
		\dr{(0,0)node[v]{}--(1,0)node[v]{}--(1,1)node[v]{}--(0,1)node[v]{}--cycle}
		&240 & 8 & Y & 
\dr{(0,0)node[v]{}--(-1,0)node[v]{}--(0,1)node[v]{}--(0,0)--(-1,1)node[v]{}--(0,1)--(30:1)node[v]{}--(0,0)}
		&47040 & 140 & Y* \\
  
        %square+diagonal
		\dr{(0,0)node[v]{}--(1,0)node[v]{}--(1,1)node[v]{}--(0,1)node[v]{}--cycle--(1,1)}
		&900 & 30 & Y* &
          %K_2,3
		\dr{(0,0)node[v]{}--(1,0)node[v]{}--(1,1)node[v]{}--(0,1)node[v]{}--cycle--(1,1) (.5,.5)node[v]{}}
		&15792 & 47 & Y \\
        
        %K_4
		\dr{(0,0)node[v]{}--(1,0)node[v]{}--(1,1)node[v]{}--(0,1)node[v]{}--cycle--(1,1) (1,0)--(0,1)}
		&2880 & 96 & Y* & 
  %K4+edge
		\dr{(1,0)node[v]{}--(0,0)node[v]{}--(-1,0)node[v]{}--(-1,1)node[v]{}--(0,1)node[v]{}--(0,0)--(-1,1) (-1,0)--(0,1)}
		&44016 & 131 & Y* \\
        %4-path
		\dr{(0,0)node[v]{}--(0.9,0)node[v]{}--(1.8,0)node[v]{}--(2.7,0)node[v]{}--(3.6,0)node[v]{}}
		&816 & - & N & %K_2,3 plus an edge
		\dr{(0,0)node[v]{}--(1,0)node[v]{}--(1,1)node[v]{}--(0,1)node[v]{}--cycle--(1,1) (.5,.5)node[v]{}--(1,0)}
		&56784 & 169 & Y\\
        %D_5
		\dr{(0,0) node[v]{} -- +(150:1) node[v]{} +(-150:1) node[v]{}--(0,0)--(1,0) node[v]{}--(2,0)node[v]{}}
		&608 & - & N &
        %gem
		\dr{(120:1)node[v]{}--(-1,0)node[v]{}--(0,0)node[v]{}--(120:1)--(60:1)node[v]{}--(1,0)node[v]{}--(0,0)--(60:1)}
		&57400 & - & N\\
        
		%4-star = ~D_4
		\dr{(0,0)node[v]{} -- ++(45:1)node[v]{} -- +(45:1)node[v]{} +(135:1)node[v]{} -- +(0,0) -- +(-45:1)node[v]{}}
		&336 & 1 & Y* &
		%house+diagonals
		\dr{(-1,0)node[v]{}--(-1,1)node[v]{}--(0,1)node[v]{}--(30:1)node[v]{}--(0,0)node[v]{}--(-1,0)--(0,1)--(0,0)--(-1,1)}
		&157920 & 470 & Y*\\

		%D_5 plus edge
		\dr{(0,0)node[v]{} -- ++(150:1)node[v]{} -- ++(-90:1)node[v]{} -- (0,0) -- (1,0)node[v]{} -- (2,0)node[v]{}}
		&4192 & - & N &
        %box with center
		\dr{(0,0)node[v]{}--(1,0)node[v]{}--(1,1)node[v]{}--(0,1)node[v]{}--(0,0)--(1,1) (1,0)--(0,1) (.5,.5)node[v]{}}
		&168000 & 500 & Y\\
		
		%bull
		\dr{(0,0)node[v]{}--(-1,0)node[v]{}--+(-150:1)node[v]{}--(-1,-1)node[v]{}--(-1,0)(-1,-1)--(0,-1)node[v]{}}    &3480 & - & N &
		%20
		\dr{(306:1)node[v]{}--(18:1)node[v]{}--(90:1)node[v]{}--(162:1)node[v]{}--(234:1)node[v]{}
			--(18:1)--(162:1)--(306:1)--(90:1)--(234:1)}
		&423360 & 1260 & Y*\\

		\dr{(0,0)node[v]{}--(0,1)node[v]{}--(-1,1)node[v]{}--(-1,0)node[v]{}--(0,0)--(1,0)node[v]{}}  
    &3856 & - & N &
		%21
		\dr{(306:1)node[v]{}--(18:1)node[v]{}--(90:1)node[v]{}--(162:1)node[v]{}--(234:1)node[v]{}
			--(18:1)--(162:1)--(306:1)--(90:1)--(234:1)--(306:1)}
		&1008000 & 3000 & Y*\\

    \end{tabular}
\end{table}

\end{document}